\definecolor {processblue}{cmyk}{0.96,0,0,0}
\newtheorem{lemma}{Lemma}[section]
\newtheorem{theorem}[lemma]{Theorem}
\newtheorem{proposition}[lemma]{Proposition}
\theoremstyle{definition}
\newtheorem{remark}[lemma]{Remark}
\newtheorem{definition}[lemma]{Definition}
\newcommand{\bsm}{\begin{smallmatrix}}
\newcommand{\esm}{\end{smallmatrix}}
\newcommand{\bbm}{\begin{matrix}}
\newcommand{\ebm}{\end{matrix}}
\begin{document}

\title{A characterization of right 4-Nakayama artin algebras}

\author{Alireza Nasr-Isfahani}
\address{Department of Mathematics\\
University of Isfahan\\
P.O. Box: 81746-73441, Isfahan, Iran\\ and School of Mathematics, Institute for Research in Fundamental Sciences (IPM), P.O. Box: 19395-5746, Tehran, Iran}
\email{nasr$_{-}$a@sci.ui.ac.ir / nasr@ipm.ir}
\author{Mohsen Shekari}
\address{Department of Mathematics\\
University of Isfahan\\
P.O. Box: 81746-73441, Isfahan, Iran}
\email{mshekari@sci.ui.ac.ir}

\subjclass[2000]{{16G20}, {16G60}, {16G70}, {16D70}, {16D90}}

\keywords{Right $4$-Nakayama algebras, Almost split sequences, Indecomposable modules, Special biserial algebras}

\begin{abstract}
We characterize right $4$-Nakayama artin algebras which appear naturally in the study of representation-finite artin algebras. For a right $4$-Nakayama artin algebra $\Lambda$, we classify all finitely generated indecomposable right $\Lambda$-modules and then we compute all almost split sequences over $\Lambda$. We also give a characterization of right $4$-Nakayama finite dimensional $K$-algebras in terms of their quivers with relations.
\end{abstract}

\maketitle

\section{introduction}
Let $R$ be a commutative artinian ring. An artin algebra is an $R$-algebra $\Lambda$ that is a finitely generated $R$-module. Let $\Lambda$ be an artin algebra. A right $\Lambda$-module $M$ is called uniserial ($1$-factor serial) if the lattice of its submodules is totally ordered under inclusion. An artin algebra $\Lambda$ is called Nakayama algebra if the indecomposable projective right $\Lambda$-modules as well as the indecomposable projective left $\Lambda$-modules are uniserial. This then implies that all the finitely generated indecomposable right $\Lambda$-modules are uniserial. Nakayama algebras are representation-finite algebras whose module categories completely understood (see \cite{ARS}, \cite{SY} and \cite{Z}). A non-uniserial right $\Lambda$-module $M$ of length $l$ is called $n$-factor serial ($l\geq n>1$), if $\frac{M}{\mathit{rad}^{l-n}(M)}$ is uniserial and $\frac{M}{\mathit{rad}^{l-n+1}(M)}$ is not uniserial (\cite[Definition 2.1]{NS}). An artin algebra $\Lambda$ is called right $n$-Nakayama if every indecomposable right $\Lambda$-module is $i$-factor serial for some $1\leq i\leq n$ and there exists at least one indecomposable $n$-factor serial right $\Lambda$-module \cite[Definition 2.2]{NS}. The authors in \cite{NS} proved that an artin algebra $\Lambda$ is representation-finite if and only if $\Lambda$ is right $n$-Nakayama for some positive integer $n$ \cite[Theorem 2.18]{NS}. Indecomposable modules and almost split sequences over right $2$-Nakayama and right $3$-Nakayama artin algebras are studied in \cite{NS} and \cite{NS1}. In this paper we will study the module categories of right $4$-Nakayama algebras. We first show that an artin algebra $\Lambda$ which is neither Nakayama nor right $2$-Nakayama nor right $3$-Nakayama is right $4$-Nakayama if and only if every indecomposable right $\Lambda$-module of length greater than $5$ is uniserial and every indecomposable right $\Lambda$-module of length $5$ is local. Then we classify all indecomposable modules and almost split sequences over a right $4$-Nakayama artin algebra. We also show that finite dimensional right $4$-Nakayama algebras are special biserial and characterize quivers and relations of finite dimensional right $4$-Nakayama algebras.
The paper is organized as follows. In Section 2 we study $4$-factor serial right $\Lambda$-modules and then we give a characterization of right $4$-Nakayama artin algebras.

In Section 3 we classify all indecomposable modules over right $4$-Nakayama artin algebras, up to isomorphisms.

In section 4 we compute all almost split sequences over right $4$-Nakayama artin algebras.

Finally in the last section we describe the structure of quivers and their relations of finite dimensional right $4$-Nakayama algebras.

\subsection{notation }
Throughout this paper all modules are finitely generated right $\Lambda$-modules and all fields are algebraically closed fields. For a $\Lambda$-module $M$, we denote by $soc(M)$, $top(M)$, $rad(M)$, $\textit{l}(M)$ and $\textit{ll}(M)$  its socle, top, radical, length and Loewy length  of $M$, respectively.
Let $Q=(Q_0, Q_1, s, t)$ be a quiver and $\alpha:i\rightarrow j$ be an arrow in $Q$. One introduces a formal inverse $\alpha^{-1}$ with $s({\alpha}^{-1}) = j$
and $t(\alpha^{-1}) = i$. An edge in $Q$ is an arrow or the inverse of an arrow. To each vertex $i$ in $Q$,
one associates a trivial path, also called trivial walk, $\varepsilon_i$ with $s(\varepsilon_i) = t(\varepsilon_i) = i$. A non-trivial
walk $w$ in $Q$ is a sequence $w=c_1c_2\cdots c_n$, where the $c_i$ is an edge such that $t(c_i) = s(c_{i+1})$ for all $i$, whose inverse $w^{-1}$ is defined to be
the sequence $w^{-1}=c_n^{-1}c_{n-1}^{-1}\cdots c_1^{-1}$. A walk $w$ is called reduced if $c_{i+1}\neq c_i^{-1}$ for each $i$. For $i\in Q_0$, we denote by $i^+$ and $i^-$ the set of arrows starting in $i$ and the set of arrows ending in $i$, respectively.\\

\section{right 4-Nakayama artin algebras}
In this section we first describe right $4$-factor serial modules and then we give a characterization of right $4$-Nakayama artin algebras.

\begin{definition}
\cite[Definitions 2.1 and 2.2]{NS} Let $\Lambda$ be an artin algebra and $M$ be a right $\Lambda$-module of length $l$.
\item[$(1)$] $M$ is called $1$-factor serial (uniserial) if $M$ has a unique composition series.
\item[$(2)$] Let $l\geq n > 1$. $M$ is called $n$-factor serial if $\frac{M}{rad^{l-n}(M)}$ is uniserial and  $\frac{M}{rad^{l-n+1}(M)}$ is not uniserial.
\item[$(3)$] $\Lambda$ is called right $n$-Nakayama if every indecomposable right $\Lambda$-module is $i$-factor serial for some $1\leq i \leq n$ and there exists at least one indecomposable $n$-factor serial right $\Lambda$-module.
\end{definition}

\begin{lemma} \label{l1}
Let $\Lambda$ be an artin algebra and $M$ be a right $\Lambda$-module of length $r$ and Loewy length $t$. The following conditions are equivalent.
\item[$(a)$] $M$ is a $4$-factor serial right $\Lambda$-module.
\item[$(b)$] One of the following conditions hold.
\begin{itemize}
\item[$(i)$]For every $0\leq i \leq r-5$, $rad^{i}(M)$ is local, $rad^{r-4}(M)$ is not local and length of $M$ is either $t+3$ or $t+2$ or $t+1$. Moreover in this case if $r=t+3$, then $rad^{r-4}(M)=soc(M)=S_{1}\oplus S_{2}\oplus S_{3}\oplus S_{4}$, where $S_i$ is a simple submodule of $M$ for each $1\leq i\leq 4$.
\item[$(ii)$] $M$ is not local, $r=4$ and the Loewy length of $M$ is either $2$ or $3$. Moreover in this case if $soc(M)$ is simple, then $ll(M)=3$, otherwise $ll(M)=2$.
\end{itemize}
\end{lemma}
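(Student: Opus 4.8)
The plan is to reduce the factor-serial hypothesis to a statement about the radical layers $L_i := rad^i(M)/rad^{i+1}(M)$, and then read off each assertion of $(b)$ from the shape of these layers. The starting point is the standard fact that a quotient of a uniserial module is uniserial, so that the truth of ``$M/rad^k(M)$ is uniserial'' is monotone in $k$. Since $rad^i(M/rad^k(M)) = rad^i(M)/rad^k(M)$ for $i \le k$, the module $M/rad^k(M)$ is uniserial if and only if every layer $L_i$ with $0 \le i \le k-1$ is simple. Consequently, $M$ is $4$-factor serial, i.e. $M/rad^{r-4}(M)$ is uniserial while $M/rad^{r-3}(M)$ is not, exactly when $L_i$ is simple for all $0 \le i \le r-5$ and $L_{r-4}$ is not simple. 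This is the computational heart of the whole equivalence.

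Next I would translate this layer condition into the local/non-local language of $(b)$ through the identity $top(rad^i(M)) = rad^i(M)/rad(rad^i(M)) = rad^i(M)/rad^{i+1}(M) = L_i$. Hence $L_i$ is simple precisely when $rad^i(M)$ is local, and $L_{r-4}$ is non-simple precisely when $rad^{r-4}(M)$ is not local. Thus the criterion of the previous paragraph reads: $rad^i(M)$ is local for $0 \le i \le r-5$ and $rad^{r-4}(M)$ is not local. For $r \ge 5$ this is exactly the structural part of $(b)(i)$, while for $r = 4$ the index range $0 \le i \le r-5$ is empty and $rad^{r-4}(M) = M$, so the condition collapses to ``$M$ is not local'', matching $(b)(ii)$. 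This already organizes the proof into the case split $r \ge 5$ versus $r = 4$.

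The numerical claims then follow from measuring $rad^{r-4}(M)$. Because the top $r-4$ layers $L_0,\dots,L_{r-5}$ are simple, we get $l(rad^{r-4}(M)) = r - (r-4) = 4$, and by the above it is not local, hence not uniserial, hence $ll(rad^{r-4}(M)) \le 3$; being nonzero it is also $\ge 1$. Since $ll(rad^{r-4}(M)) = t - (r-4)$, this forces $r - t \in \{1,2,3\}$, i.e. $r \in \{t+1, t+2, t+3\}$. In the extremal case $r = t+3$ we have $ll(rad^{r-4}(M)) = 1$, so $rad^{r-4}(M) = rad^{t-1}(M)$ is semisimple of length $4$, giving $rad^{r-4}(M) = S_1 \oplus S_2 \oplus S_3 \oplus S_4$. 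To identify it with $soc(M)$, note $M = rad^0(M)$ is local; one inclusion is clear since a semisimple submodule lies in the socle, and for the reverse I would take a simple $S \subseteq soc(M)$ with $S \not\subseteq rad^{t-1}(M)$, pick the largest $j \le t-2$ with $S \subseteq rad^j(M)$, and use that $L_j$ is simple to get $rad^j(M) = rad^{j+1}(M) \oplus S$; applying $rad(-)$ yields $rad^{j+1}(M) = rad^{j+2}(M)$, so by Nakayama $rad^{j+1}(M) = 0$, contradicting $j+1 \le t-1$. Hence $soc(M) = rad^{r-4}(M)$.

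It remains to settle $(b)(ii)$ and the converse. The converse is routine: assuming the structural conditions of $(i)$ or $(ii)$, the translation of the first paragraph shows the required layers are simple and $L_{r-4}$ is not, so $M/rad^{r-4}(M)$ is uniserial and $M/rad^{r-3}(M)$ is not, giving $4$-factor seriality; and I have shown the numerical conditions are automatic, so no extra hypothesis is needed. The genuinely delicate point — the expected main obstacle — is the socle/Loewy-length correspondence in $(ii)$, where $M$ itself is the length-$4$ non-local module and the clean local argument above is unavailable. Here I would classify the length-$4$ non-local modules by their radical-layer shapes, namely $(2,1,1)$ for $ll(M) = 3$ and $(2,2)$ or $(3,1)$ for $ll(M) = 2$, and in each case compute $soc(M)$ directly from the layer data to match it against the claimed Loewy length; this bookkeeping, rather than any conceptual difficulty, is where the care is required.
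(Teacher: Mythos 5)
Your first three paragraphs and the converse are correct, and they are genuinely more self-contained than the paper's own proof, which simply quotes \cite{NS} (Theorem 2.6, Remark 2.7, Corollary 2.8, Lemma 2.21) for exactly the facts you prove by hand: the layer criterion for uniseriality of $M/rad^{k}(M)$, the equivalence between simplicity of the layer $L_i$ and locality of $rad^{i}(M)$, the count $l(rad^{r-4}(M))=4$ forcing $t+1\leq r\leq t+3$, and the Nakayama argument identifying $soc(M)=rad^{r-4}(M)$ when $r=t+3$ are all sound.

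The gap is in your last paragraph, and it is not a matter of careful bookkeeping: the dichotomy you propose to verify for non-local $M$ of length $4$ (``$soc(M)$ simple iff $ll(M)=3$'') is false, so no case analysis of layer shapes can establish it. Concretely, let $Q$ have vertices $0,1,2,3$ and arrows $\alpha_i\colon i\to 0$ for $i=1,2,3$, and let $W$ be the right $KQ$-module with $K$ at every vertex and all structure maps the identity. Then $W$ is indecomposable (its endomorphism ring is $K$), non-local of length $4$, hence $4$-factor serial because $W/rad^{0}(W)=0$ is uniserial while $W/rad(W)=S_1\oplus S_2\oplus S_3$ is not; it has exactly your layer shape $(3,1)$, yet $soc(W)=S_0$ is simple and $ll(W)=2$. (Since the lemma does not assume indecomposability, $S\oplus U$ with $U$ uniserial of length $3$, and semisimple modules of length $4$, where $ll=1\notin\{2,3\}$, are further counterexamples.) So your routing of the case $r=4$ into $(b)(ii)$ cannot be completed. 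The repair is structural rather than computational: $(b)$ is a disjunction, and every non-local $4$-factor serial module already satisfies $(b)(i)$ --- at $r=4$ the locality conditions are vacuous, $rad^{0}(M)=M$ is not local, $t\leq 3$ because $M$ is by definition not uniserial, so $r\in\{t+1,t+2,t+3\}$, and if $r=t+3$ then $t=1$, so $M$ is semisimple of length $4$ and the ``moreover'' clause of $(i)$ holds trivially. Condition $(ii)$, with its Loewy-length and socle clauses, then only needs to be used as a hypothesis in the direction $(b)\Rightarrow(a)$, where your argument already suffices. You are in good company: the paper's proof also sends non-local modules to $(ii)$ with the words ``the result follows,'' which the module $W$ shows cannot be taken literally (the dichotomy could only be salvaged under hypotheses absent from the statement, e.g.\ indecomposability over a special biserial algebra); the lemma as stated survives only because $(i)$ absorbs these modules.
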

\begin{proof}
$(a)\Longrightarrow (b)$. Assume that  $M$ is a local right $\Lambda$-module. Then by \cite[Theorem 2.6]{NS}, for every $0\leq i \leq r-5$, $rad^{i}(M)$ is local and  $rad^{r-4}(M)$ is not local. On the other hand by \cite[Lemma 2.21]{NS}, $t+1 \leq r \leq t+3$. If $r= t+3$, then by \cite[Remark 2.7]{NS}, $soc(M)\subseteq rad^{r-4}(M) $ and $t=r-3$. Therefore $soc(M)= rad^{r-4}(M) =S_{1}\oplus S_{2}\oplus S_{3}\oplus S_{4}$. This finishes the proof of $(i)$. Assume that $M$ is not local. So by \cite[Corollary 2.8]{NS}, $r=4$ and the result follows.\\
$(b)\Longrightarrow (a)$. If $M$ is not local and $r=4$, then by \cite[Corollary 2.8]{NS}, $M$ is a $4$-factor serial right $\Lambda$-module. If $M$ satisfies the condition $(i)$, then $ \frac{M}{rad^{r-4}(M)} $ is uniserial and $ \frac{M}{rad^{r-3}(M)} $ is not uniserial. Therefore $M$ is a $4$-factor serial right $\Lambda$-module
\end{proof}

An Artin algebra $\Lambda$ is of right $n$-th local type if for every indecomposable right $\Lambda$-module $M$, $top^{n}(M)=\frac{M}{rad^{n}(M)}$ is indecomposable \cite{A1}.

\begin{lemma}\label{l2}
Let $\Lambda$ be a right $4$-Nakayama artin algebra. Then $\Lambda$ is of $3$-ed local type.
\end{lemma}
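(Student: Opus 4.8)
The plan is to unwind the definition directly: to show that $\Lambda$ is of right $3$-rd local type I must verify that $top^{3}(M)=M/rad^{3}(M)$ is indecomposable for every indecomposable right $\Lambda$-module $M$. Since $\Lambda$ is right $4$-Nakayama, each such $M$ is $i$-factor serial for some $1\leq i\leq 4$, and I would organize the whole argument around the dichotomy of whether $M$ is local.

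First I would treat the local case, which I expect to be immediate. If $M$ is local then $top(M)=M/rad(M)$ is simple. Because $rad^{3}(M)\subseteq rad(M)$, one has $rad(M/rad^{3}(M))=rad(M)/rad^{3}(M)$, and hence $top(M/rad^{3}(M))\cong top(M)$ is simple. Thus $M/rad^{3}(M)$ is again local and therefore indecomposable. This disposes in one stroke of all uniserial ($1$-factor serial) modules as well as the local $4$-factor serial modules described in part $(i)$ of Lemma \ref{l1}.

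The substance lies in the non-local case. Here $M$ is $i$-factor serial with $i\geq 2$, since a uniserial module is local. The key observation I would establish is that a non-local $i$-factor serial module has length exactly $i$: if $l(M)>i$ then $rad^{l(M)-i}(M)\subseteq rad(M)$, so $M/rad^{l(M)-i}(M)$ would be a nonzero uniserial quotient of $M$ whose top equals the non-simple $top(M)$, which is impossible. Together with $i\leq 4$ this yields $l(M)\leq 4$. Since $M$ is not local, $top(M)$ has length at least $2$, so counting layers of the radical series forces $ll(M)\leq l(M)-1\leq 3$; concretely, for $i=4$ this is exactly the content of part $(ii)$ of Lemma \ref{l1} together with \cite[Corollary 2.8]{NS}, where the non-local $4$-factor serial modules are shown to satisfy $l(M)=4$ and $ll(M)\in\{2,3\}$. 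Consequently $rad^{3}(M)=0$, whence $M/rad^{3}(M)=M$ is indecomposable.

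Combining the two cases, $M/rad^{3}(M)$ is indecomposable for every indecomposable $M$, so $\Lambda$ is of right $3$-rd local type. The only delicate point is the non-local case, and the crux there is precisely the Loewy-length bound $ll(M)\leq 3$: once one knows that non-local indecomposables have length at most $4$ and non-simple top, the Loewy length is automatically at most $3$ and the statement collapses to the observation that $rad^{3}$ annihilates such a module. I therefore expect no genuine obstacle beyond quoting correctly the length and Loewy-length estimates recorded in Lemma \ref{l1} and in \cite{NS}.
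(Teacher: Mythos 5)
Your proof is correct, and its skeleton coincides with the paper's: split on whether $M$ is local, dispose of the local case by noting that a quotient of a local module is again local (hence indecomposable), and in the non-local case reduce everything to the Loewy-length bound $ll(M)\leq 3$, which forces $rad^{3}(M)=0$ and $M/rad^{3}(M)\cong M$. The difference lies in how that bound is obtained. The paper cites \cite[Lemma 5.2]{NS} to see that a non-local indecomposable module over a right $4$-Nakayama algebra is $3$- or $4$-factor serial, and then quotes \cite[Lemma 2.1]{NS1} together with Lemma \ref{l1} for the Loewy-length estimates; you instead argue from first principles: a non-local $i$-factor serial module must have length exactly $i$, since otherwise $M/rad^{l(M)-i}(M)$ would be a nonzero uniserial quotient whose top is the non-simple $top(M)$, and then the non-simple top forces $ll(M)\leq l(M)-1\leq 3$ by counting radical layers. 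Your inline argument is essentially a re-proof of \cite[Corollary 2.8]{NS} (non-local $n$-factor serial modules have length $n$) plus an elementary counting step, so it buys a self-contained proof that does not depend on the cited lemmas of \cite{NS} and \cite{NS1}, at the cost of redundancy with results the paper already has at hand; both routes are rigorous and reach the same reduction.
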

\begin{proof}
Let $M$ be an indecomposable right $\Lambda$-module. If $M$ is local, then $\frac{M}{rad^{3}(M)}$ is indecomposable. If $M$ is not local, then by \cite[Lemma 5.2]{NS}, $M$ is either $3$-factor serial or $4$-factor serial. By \cite[Lemma 2.1]{NS1} and Lemma \ref{l1}, $ll(M)\leq 3$, so $ \frac{M}{rad^{3}(M)}\cong M $ is indecomposable and the result follows.
\end{proof}

Let $\Lambda$ be an artin algebra.  The valued quiver of $\Lambda$ is a quiver with $n$ vertices, where $n$ is the number of the isomorphism classes of simple right $\Lambda$-modules and with at most one arrow from a vertex $i$ to the vertex $j$ and with an ordered pair of positive integers associated with each arrow. This is done by writing an arrow from $i$ to $j$ if $Ext_{\Lambda}^{1}(S_i,S_j)\neq 0$, where $S_i$ and $S_j$ are simple $\Lambda$-modules corresponding to the vertices $i$ and $j$ and assigning to this arrow the pair of integers  $(dim_{End_{{{\Lambda}}}{(S_j)}} Ext_{\Lambda}^{1}(S_i,S_j), dim_{End_{{{\Lambda}}}{(S_i)}^{op}} Ext_{\Lambda}^{1}(S_i,S_j))$\cite{ARS}. Note that by \cite[Proposition III.1.15]{ARS}, $Ext_{\Lambda}^{1}(S_i,S_j)\neq 0$ if and only if $S_j$ is a direct summand of $\frac{rad (P_i)}{rad^{2} (P_i)}$, where $P_i$ is a projective cover of $S_i$.\\

Let $\Lambda$ be an artin algebra and $M$ be a right $\Lambda$-module. $M$ has a waist if there is a nontrivial proper submodule $N$
of $M$ such that every submodule of $M$ contains $N$ or is contained in $N$. In this case, $N$ is called a waist in $M$.

\begin{proposition}\label{p1}
Let $\Lambda$ be a right $4$-Nakayama artin algebra and $M$ be an indecomposable  local right  $\Lambda$-module. Then the  following statements hold.
\item[$(a)$] If $M$ is  $4$-factor serial, then $l(M)=5$.
\item[$(b)$] If $M$ is  $3$-factor serial, then $l(M)=4$.
\item[$(c)$] If $M$ is  $2$-factor serial, then $l(M)=3$.
\end{proposition}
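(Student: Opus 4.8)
The plan is to determine $l(M)$ in each case from the radical filtration of a local factor-serial module, isolating the easy lower bounds from the real content, which are the upper bounds. The lower bounds are immediate from locality: if $M$ is local of length $l$, then $top(M)$ is simple, so $M/rad^{i}(M)$ has simple top for every $i$, and in particular if $l=n$ then $M/rad^{l-n+1}(M)=M/rad(M)=top(M)$ is uniserial, so $M$ fails to be $n$-factor serial. Hence a local $n$-factor serial module has length at least $n+1$, giving $l(M)\geq 5$, $l(M)\geq 4$, $l(M)\geq 3$ in $(a)$, $(b)$, $(c)$ respectively.

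For the upper bounds I would first record, via the structure result \cite[Theorem 2.6]{NS} underlying Lemma \ref{l1}, that a local $n$-factor serial module $M$ of length $l$ has $rad^{i}(M)$ local for $0\leq i\leq l-n-1$ and $rad^{l-n}(M)$ non-local of length $n$; thus $M$ consists of a uniserial top quotient $M/rad^{l-n}(M)$ of length $e:=l-n$ sitting over the non-local submodule $rad^{l-n}(M)$. The desired conclusion $l(M)=n+1$ is exactly the assertion that $e=1$, i.e. that $rad(M)$ is already non-local. For $n=4$ this picture, together with the bound $l\leq ll(M)+3$ and the socle description of Lemma \ref{l1}$(i)$, is what must be reduced to $e=1$.

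The key step is a downward reduction on the factor-serial type. Suppose $M$ is local and $n$-factor serial with $e=l-n\geq 2$ and $3\leq n\leq 4$. I would choose a simple submodule $T$ of $rad^{l-n}(M)$, taking $T\subseteq rad^{l-n+1}(M)$ whenever $rad^{l-n}(M)$ is non-semisimple, and pass to $M/T$. Because $T\subseteq rad^{l-n}(M)\subseteq rad^{j}(M)$ for $j\leq l-n$, one has $rad^{j}(M/T)=rad^{j}(M)/T$; these are local for $j<l-n$, while $rad^{l-n}(M)/T$ still has a top of length at least $2$ and is therefore non-local. Thus $M/T$ is a local, hence indecomposable, $(n-1)$-factor serial module of length $l-1$ and excess $e\geq 2$. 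Iterating, a local $4$- or $3$-factor serial module of excess at least $2$ yields a local $2$-factor serial module of length at least $4$; so it suffices to exclude the latter, and parts $(a)$, $(b)$ follow from the base case $n=2$.

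This base case is the step I expect to be the main obstacle. A hypothetical local $2$-factor serial module of length $\geq 4$ is a uniserial chain of length $\geq 2$ lying over a semisimple socle $rad^{l-2}(M)=S_{a}\oplus S_{b}$, which on the valued quiver of $\Lambda$ produces a vertex carrying an incoming arrow together with two distinct outgoing arrows. Ruling this out is not a lattice-theoretic matter but a global one: one must use that $\Lambda$ is right $4$-Nakayama, rather than merely that every indecomposable is at most $4$-factor serial. I would analyse the projective cover $P(S_{1})$ of $top(M)$, of which $M$ is a quotient, and use the length and Loewy-length estimates of \cite{NS} (as invoked in the proofs of Lemma \ref{l1} and Lemma \ref{l2}) to show that such a configuration would force an indecomposable right $\Lambda$-module of factor-serial type exceeding $4$, contradicting the hypothesis. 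Granting the base case, $(b)$ and $(a)$ follow from the reduction, and the socle description in Lemma \ref{l1}$(i)$ then records the precise shape of the length-$5$ modules in $(a)$.
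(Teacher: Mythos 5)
Your lower bounds are correct, and your reduction step is both correct and genuinely different from the paper's route. It checks out as you describe: choosing the simple $T$ inside $rad^{l-n+1}(M)$ when $rad^{l-n}(M)$ is not semisimple gives $(M/T)/rad^{l-n+1}(M/T)\cong M/rad^{l-n+1}(M)$, which is not uniserial, while in the semisimple case $(M/T)/rad^{l-n+1}(M/T)=M/T$ has socle of length at least $2$; in both cases $M/T$ is local of length $l-1$ and $(n-1)$-factor serial with the same excess, so $(a)$ and $(b)$ collapse onto $(c)$. The paper instead proves $(a)$ by descending the radical series of $M$ itself: it first invokes \cite[Lemma 2.3]{NS1} to get that $M$ is projective, then Lemma \ref{l2} together with Asashiba's theorem \cite[Theorem 2.5]{A2} to make $rad^{3}(M)$ uniserial and a waist (which kills the case where $rad(M)$ and $rad^{2}(M)$ are both local, since $rad^{3}(M)$ would then be $4$-factor serial by \cite[Theorem 2.6]{NS}), and finally a valued-quiver argument to exclude the remaining case $rad(M)$ local, $rad^{2}(M)$ non-local of length $4$; parts $(b)$ and $(c)$ are then declared ``similar''. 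Your reduction is leaner: it needs neither the projectivity of $M$ nor \cite{A2}, and it treats all three parts uniformly.

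However, there is a genuine gap at exactly the point you flag as the main obstacle: your base case is a plan, not a proof, and it is where the entire content of the paper's argument lives. What must be shown is that a local $2$-factor serial module of length $l\geq 4$ cannot exist over a right $4$-Nakayama algebra. The paper's mechanism (written out inside its proof of $(a)$, and the same for $(b)$, $(c)$) is: each $rad^{i}(M)$ with $i\leq l-3$ is local, hence a quotient of an indecomposable projective, so \cite[Proposition III.1.15]{ARS} yields arrows $u\to v$, $v\to a$, $v\to b$ in the valued quiver, where $S_v$ is the simple layer sitting directly above $rad^{l-2}(M)=S_a\oplus S_b$ and $S_u$ the layer above that (this needs $l\geq 4$); this subquiver forces the existence of a non-local indecomposable right $\Lambda$-module $N$ of length $5$ (the $D_4$-shaped module with top $S_u\oplus S_v$), and then \cite[Corollary 2.8]{NS} --- a non-local indecomposable module of length $r$ is $r$-factor serial, because its top is already not uniserial --- makes $N$ a $5$-factor serial module, contradicting the hypothesis that $\Lambda$ is right $4$-Nakayama. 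Your sketch names the quiver configuration but not this mechanism: neither the non-local length-$5$ module nor the passage from it to ``factor-serial type exceeding $4$'' appears, and the ``length and Loewy-length estimates'' of \cite{NS} will not by themselves produce the contradiction. To be fair, the paper also asserts rather than constructs $N$ from the subquiver; but it pins down the object and the corollary that finish the argument, which is precisely what your proposal leaves unspecified. As written, you have proved $(a)$ and $(b)$ modulo $(c)$, and $(c)$ remains open.
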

\begin{proof}
$(a)$. By \cite[Lemma 2.3]{NS1}, $M$ is a projective right $\Lambda$-module and by Lemma \ref{l2}, $\Lambda$ is of $3$-ed local type. If $rad^{3}(M)\neq 0$, then by \cite[Theorem 2.5]{A2}, $rad^{3}(M)$ is uniserial and waist of $M$. Also by \cite[Theorem 2.6]{NS}, $rad(M) $ is $4$-factor serial. Assume that $rad(M)$ is local. Then by \cite[Theorem 2.6]{NS}, $rad^2(M)$ is $4$-factor serial. If $rad^2(M)$ is local, then by \cite[Theorem 2.6]{NS}, $rad^{3}(M)$ is $4$-factor serial which gives a contradiction. Therefore $rad^2(M)$ is non-local and by \cite[Corollary 2.8]{NS} $\textit{l}(rad^2(M))=4$. This implies that $\textit{l}(M)=6$. Let $top(M)=S_1$, $top(rad(M))=S_2$ and $S_3, S_4$ be direct summands of $top(rad^2(M))$. Since $M$ is projective, by \cite[Proposition III.1.15]{ARS}, there exists one arrow from the vertex $1$ to the vertex $2$ in the valued quiver of $\Lambda$. Since $rad(M)$ is local, $rad(M)$ is either projective or quotient of an indecomposable projective right $\Lambda$-module $P_2$. Assume that $rad(M)$ is projective. Since $S_3$ and $S_4$ are direct summands of $top(rad^2(M))$, by \cite[Proposition III.1.15]{ARS}, there are one arrow from the vertex $2$ to the vertex $3$ and one arrow from the vertex $2$ to the vertex $4$ in the valued quiver of $\Lambda$. Then the valued quiver of $\Lambda$ has a subquiver of the form

$$\hskip .5cm \xymatrix@-4mm{
&&{4}\ar @{<-}[dl]\\
{1}\ar [r]&{2}\ar [dr]\\
&& {3}}\hskip .5cm$$
which implies that there exists a non-local indecomposable right $\Lambda$-module $N$ of length $5$. Therefore by \cite[Corollary 2.8]{NS}, $N$ is a $5$-factor serial right $\Lambda$-module which is a contradiction. Now assume that $rad(M)$ is not projective. Then $rad(M)\cong \frac{P_2}{K}$, where $P_2$ is the indecomposable projective right $\Lambda$-module and $K$ is a submodule of $P_2$ and so $rad^{2}(M)\cong \frac{rad(P_2)}{K}$. Then $S_3$ and $S_4$ are direct summands of $rad(P_2)$ and by \cite[Proposition III.1.15]{ARS}, there are one arrow from the vertex $2$ to the vertex $3$ and one arrow from the vertex $2$ to the vertex $4$ in the valued quiver of $\Lambda$. Then the valued quiver of $\Lambda$ has a subquiver of the form
$$\hskip .5cm \xymatrix@-4mm{
&&{4}\ar @{<-}[dl]\\
{1}\ar [r]&{2}\ar [dr]\\
&& {3}}\hskip .5cm$$
which implies that there exists a non-local indecomposable right $\Lambda$-module  $N$ of length $5$. Then by \cite[Corollary 2.8]{NS}, $N$ is a $5$-factor serial right $\Lambda$-module which is a contradiction. Therefore $rad(M)$ is non-local and by \cite[Corollary 2.8]{NS}, $rad(M)$ is of the length $4$. It implies that $l(M)=5$.\\
In the parts $(b)$ and $(c)$, $M$ is local. Then $M$ is either projective or quotient of an indecomposable projective. Then by the similar argument as in the proof of the part $(a)$, we can prove parts $(b)$ and $(c)$.
\end{proof}

Now we give a characterization of right $4$-Nakayama artin algebras.

\begin{theorem}\label{t1}
Let $\Lambda$ be an artin algebra which is  neither Nakayama, nor right $2$-Nakayama, nor right $3$-Nakayama.
Then  $\Lambda$ is right $4$-Nakayama if and only if every indecomposable right $\Lambda$-module $M$ of length greater than $5$ is uniserial and every indecomposable right $\Lambda$-module $M$ of length $5$ is local.
\end{theorem}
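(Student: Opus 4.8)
The backbone of both implications is an elementary consequence of the definition, recorded in \cite[Corollary 2.8]{NS}: a non-local indecomposable right $\Lambda$-module has factor-serial number equal to its length, i.e. it is $l(M)$-factor serial. Indeed, if $M$ is non-local then $top(M)=\frac{M}{rad(M)}$ is not simple, hence not uniserial; since a quotient of a uniserial module is uniserial, $\frac{M}{rad^{j}(M)}$ fails to be uniserial for every $j\geq 1$, so the only uniserial quotient is $\frac{M}{rad^{0}(M)}$ and the defining transition forces $n=l(M)$. The plan is to isolate this observation first, since it converts assertions about factor-serial numbers of non-local modules into assertions about their lengths, and then to prove the two implications separately.

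For the implication $(\Rightarrow)$, assume $\Lambda$ is right $4$-Nakayama, so every indecomposable is $i$-factor serial for some $1\leq i\leq 4$. Let $M$ be indecomposable with $l(M)>5$. If $M$ were non-local it would be $l(M)$-factor serial with $l(M)>4$, contradicting the hypothesis; hence $M$ is local, and if it were non-uniserial then Proposition \ref{p1} would bound its length by $5$, a contradiction, so $M$ is uniserial. If instead $l(M)=5$ and $M$ were non-local, it would be $5$-factor serial, again contradicting right $4$-Nakayama; hence $M$ is local. This yields both conclusions.

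For the implication $(\Leftarrow)$, I would first show that the two hypotheses force every indecomposable to be $i$-factor serial for some $1\leq i\leq 4$, arguing by contradiction: if some indecomposable $M$ is $n$-factor serial with $n\geq 5$, then $l(M)\geq 5$. If $l(M)\geq 6$ the hypothesis makes $M$ uniserial, forcing $n=1$; if $l(M)=5$ then $n=5$, which by the opening observation makes $M$ non-local, contradicting that length-$5$ indecomposables are local. Consequently the set of factor-serial numbers of indecomposables is contained in $\{1,2,3,4\}$; let $m$ be its maximum. If $m=1$ every indecomposable right module is uniserial, which forces $\Lambda$ to be Nakayama; if $m=2$ or $m=3$ then $\Lambda$ is right $2$-Nakayama or right $3$-Nakayama by definition. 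All three possibilities are excluded by hypothesis, so $m=4$: there is a $4$-factor serial indecomposable and $\Lambda$ is right $4$-Nakayama.

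The length and locality bookkeeping is routine once the opening observation and Proposition \ref{p1} are in hand; the step needing care is the final exclusion in $(\Leftarrow)$. The main obstacle is closing the sub-case $m=1$, where every indecomposable right module is uniserial: one must argue that this already forces $\Lambda$ to be a \emph{Nakayama} algebra rather than merely a ``right uniserial'' one. This is where the standard duality enters, sending indecomposable injective right modules to indecomposable projective left modules and preserving uniseriality, so that uniseriality of all indecomposable right modules propagates to the indecomposable projectives on both sides. Everything else reduces to the observation on non-local modules together with Proposition \ref{p1}.
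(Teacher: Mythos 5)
Your proof is correct and follows essentially the same route as the paper's: the forward direction combines Proposition \ref{p1} with \cite[Corollary 2.8]{NS}, and the converse first bounds every factor-serial number by $4$ and then uses the hypothesis that $\Lambda$ is neither Nakayama nor right $2$-Nakayama nor right $3$-Nakayama to force the existence of a $4$-factor serial indecomposable. The only differences are cosmetic: where the paper cites \cite[Lemma 2.21]{NS} to handle local modules in the converse, you derive the bound directly from the two hypotheses, and you make explicit the step (left implicit in the paper) that all indecomposables being uniserial forces $\Lambda$ to be Nakayama via the duality argument.
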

\begin{proof}
  Let $\Lambda$ be a right $4$-Nakayama algebra. It follows from Proposition \ref{p1} that, every indecomposable right $\Lambda$-module $M$ of length greater than $5$ is uniserial. If there exists an indecomposable non-local  right  $\Lambda$-module $M$ of length $5$, then by  \cite[Corollary 2.8]{NS}, $M$ is $5$-factor serial which gives a contradiction. Conversely, assume that any indecomposable right $\Lambda$-module of length greater than $5$ is uniserial and every indecomposable right $\Lambda$-module of length $5$ is local, so by  \cite[Corollary 2.8]{NS} and  \cite[Lemma 2.21]{NS}, every indecomposable right $\Lambda$-module is $t$-factor serial for some $t\leq 4$. Since $\Lambda$ is neither Nakayama, nor right $2$-Nakayama, nor right $3$-Nakayama, there exists an indecomposable $t$-factor serial right $\Lambda$-module $M$ for some $t\geq 4$. Therefore $\Lambda$ is right $4$-Nakayama.
\end{proof}

\section{indecomposable modules over right $4$-Nakayama artin algebras}

In this section we give a classification of finitely generated indecomposable modules over right $4$-Nakayama artin algebras.

\begin{proposition}\label{p2}
Let $\Lambda$ be a right $4$-Nakayama artin algebra and $M$ be an indecomposable right $\Lambda$-module. Then $l(soc(M))\leq 2$.
\end{proposition}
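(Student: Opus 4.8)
The plan is to reduce the socle of $M$ to a socle sitting one or two steps down the radical filtration, where the length bounds of the previous section take over. The basic tool I would prove first is the following claim: \emph{for every indecomposable non-simple module $N$ one has $soc(N)\subseteq rad(N)$.} Suppose $v\in soc(N)\setminus rad(N)$. Since $v\,rad(\Lambda)=0$, the submodule $v\Lambda$ is simple; as $v\notin rad(N)$, its image under the projection $\pi\colon N\to top(N)$ is a nonzero element of the semisimple module $top(N)$, generating a simple summand $\overline{S}$. Writing $top(N)=\overline{S}\oplus T$ and setting $L=\pi^{-1}(T)$, one checks $v\Lambda\cap L=0$ and $v\Lambda+L=N$, so $N=v\Lambda\oplus L$, contradicting indecomposability. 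Applying this to $M$ gives $soc(M)\subseteq rad(M)$, and hence $soc(M)=soc(rad(M))$.

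Next I would split into cases according to the factor-serial type of $M$, using that $l(M)\le 5$ by Proposition~\ref{p1} together with \cite[Corollary 2.8]{NS}. If $M$ is uniserial, $soc(M)$ is simple. If $M$ is non-local, the claim alone finishes the job: $l(soc(M))\le l(rad(M))=l(M)-l(top(M))\le l(M)-2\le 2$, since a non-local indecomposable has $l(top(M))\ge 2$ and, by \cite[Corollary 2.8]{NS}, length at most $4$. Finally, suppose $M$ is local and not uniserial. Then by \cite[Theorem~2.6]{NS} and Proposition~\ref{p1} its radical $rad(M)$ is non-local of length $l(M)-1\le 4$, and $soc(M)=soc(rad(M))$. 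When $rad(M)$ is indecomposable, a second application of the claim gives $soc(M)=soc(rad(M))\subseteq rad^{2}(M)$, whence $l(soc(M))\le l(rad^{2}(M))=l(rad(M))-l\bigl(top(rad(M))\bigr)\le 4-2=2$, again because $rad(M)$ is non-local.

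The main obstacle is the remaining local case in which the radical keeps \emph{branching}, i.e. $rad(M)$ decomposes or contains a non-uniserial local block; then the socle could a priori accumulate a simple summand from each branch and reach length $\ge 3$. Here I would argue directly that this is impossible in a right $4$-Nakayama algebra: tracing the simple submodules of $soc(M)=soc(rad(M))$ up the radical filtration, a socle of length $\ge 3$ would force, among the composition factors of $M$, either a vertex of out-degree $\ge 3$ or the subquiver in which a vertex $2$ receives an arrow from a vertex $1$ and emits arrows to two vertices $3,4$. Each of these configurations is excluded exactly as in the proof of Proposition~\ref{p1}, since each produces a non-local indecomposable $\Lambda$-module of length $5$, which by \cite[Corollary 2.8]{NS} is $5$-factor serial --- contradicting that $\Lambda$ is right $4$-Nakayama. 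Thus the branching of the radical filtration stays binary and occurs only at the top, and this shallowness is precisely what caps $l(soc(M))$ at $2$.
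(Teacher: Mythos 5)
Your proposal is correct, but it takes a genuinely different route from the paper. The paper argues type by type (uniserial, $2$-, $3$-, $4$-factor serial, each split into local and non-local), quoting \cite[Lemma 5.1]{NS}, \cite[Lemma 2.1]{NS1}, etc., and for its hardest case --- a local $4$-factor serial projective $M$ with $rad(M)=soc(M)$ of length $4$ --- it invokes AR theory: four almost split sequences starting at the simple summands of $soc(M)$ yield $\tau^{-1}(M)$ of length $11$, whose uniseriality (Proposition \ref{p1}) contradicts \cite[Theorem 2.13]{NS} and \cite[Corollary 2.17]{NS}. You replace all of this by the single standard lemma that $soc(N)\subseteq rad(N)$ for indecomposable non-simple $N$: it settles every non-local $M$ at once (length $\le 4$ by \cite[Corollary 2.8]{NS}, top of length $\ge 2$, hence socle of length $\le 2$), settles every local $M$ with indecomposable radical by the same length count one step down, and reduces the remainder to a decomposable radical of length $\le 4$ carrying socle of length $\ge 3$. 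Your dichotomy there is genuinely exhaustive, though you should spell out the one-line enumeration: either $rad(M)$ has at least three indecomposable summands (out-degree $\ge 3$ at the top vertex, via the projective cover and \cite[Proposition III.1.15]{ARS}), or it has exactly two summands, one simple and one local $2$-factor serial of length $3$ (the in-one-out-two subquiver); both configurations are excluded exactly as in Proposition \ref{p1}, which is also how the paper kills its socle-$3$ cases. What your route buys is the complete elimination of the AR-sequence argument (the paper's socle-$4$ case becomes just ``at least three summands'') and a more uniform treatment; what the paper's route buys is that its case-by-case analysis re-derives the explicit submodule structure it needs anyway in Theorem \ref{T2}. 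One small repair to your lemma: for $v\in soc(N)$ the module $v\Lambda$ is only semisimple, not necessarily simple, so instead choose a simple submodule $S\not\subseteq rad(N)$ --- one exists since $soc(N)$ is a sum of simples and $soc(N)\not\subseteq rad(N)$ --- and run your splitting argument $N=S\oplus\pi^{-1}(T)$ with that $S$.
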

\begin{proof}
If $M$ is uniserial, then $l(soc(M))= 1$ and if $M$ is $2$-factor serial, then by \cite[Lemma 5.1]{NS}, $l(soc(M))= 2$. Assume that $M$ is $3$-factor serial. If $M$ is non-local, then $l(M)=3$ and by  \cite[Lemma 2.1]{NS1}, $l(soc(M))=1$ and if $M$ is local, then by Proposition \ref{p1}, $l(M)=4$ and $l(soc(M))\leq 3$. Assume that on the contrary that $rad(M)=soc(M)=S_1\oplus S_2\oplus S_3$ and $top(M)=S_4$, where $S_i$ is a simple $\Lambda$-module for each $1\leq i\leq 4$. Then  by the similar argument as in the proof of the Proposition \ref{p1}, the valued quiver of $\Lambda$ has a subquiver of the form
$$\hskip .5cm \xymatrix@-4mm{
&{1}\ar @{<-}[dl]\\
{4}\ar [r]\ar[dr]& {2}\\
&{3}}\hskip .5cm$$
Then there exists a non-local indecomposable right $\Lambda$-module  of length $5$ that by  \cite[Corollary 2.8]{NS}  is $5$-factor serial which gives a contradiction. Now assume that $M$ is $4$-factor serial. If $M$  is non-local, then by Lemma \ref{l1}, $l(M)=4$ and since $l(top(M))\geq 2$, $l(soc(M))\leq 2$. If $M$ is local, then by \cite[Lemma 2.2]{NS1}, $M$ is projective and by Proposition \ref{p1}, $l(M)=5$. Assume on the contrary that $l(soc(M))=4$. Then $rad(M)=soc(M)=S_1\oplus S_2\oplus S_3\oplus S_4$, where $S_i$ is a simple $\Lambda$-module for each $1\leq i\leq 4$. Therefore we have almost split sequences
 \begin{center}
$0\longrightarrow S_1\longrightarrow M\longrightarrow \tau^{-1}(S_1)\longrightarrow0 $
\end{center}
\begin{center}
$0\longrightarrow S_2\longrightarrow M\longrightarrow \tau^{-1}(S_2)\longrightarrow0 $
\end{center}
\begin{center}
$0\longrightarrow S_3\longrightarrow M\longrightarrow \tau^{-1}(S_3)\longrightarrow0 $
\end{center}
\begin{center}
$0\longrightarrow S_4\longrightarrow M\longrightarrow \tau^{-1}(S_4)\longrightarrow0 $
\end{center}

$$\xymatrix@-5mm{
{0}\ar[r]&M\ar[r]&{\tau^{-1}(S_1)\oplus \tau^{-1}(S_2)\oplus \tau^{-1}(S_3)\oplus \tau^{-1}(S_4)}\ar[r]&{\tau^{-1}(M)}\ar[r]&{0}}
$$
Then $\tau^{-1}(M)$ is an indecomposable right $\Lambda$-module of length $11$ and by Proposition \ref{p1}, $\tau^{-1}(M)$ is uniserial. Also for each $1\leq i \leq 4$, irreducible morphisms $ \tau^{-1}(S_i)\longrightarrow \tau^{-1}(M) $ are monomorphisms. For each $1\leq i \leq 4$, $\tau^{-1}(S_i)\cong \frac{M}{S_i}$ and by \cite[Theorem 2.13]{NS}, there exists  $1\leq i \leq 4$ such that $\tau^{-1}(S_i)\cong \frac{M}{S_i}$ is not uniserial which gives a contradiction to the Corollary 2.17 of \cite{NS}. Now assume that $l(soc(M))= 3$ and $soc(M)=S_1\oplus S_2\oplus S_3$ where $S_i$ is a simple $\Lambda$-module for each $1\leq i\leq 3$.
If $rad(M)=N\oplus S_2\oplus S_3$, where $N$ is uniserial of length $2$ and $top(N)=S_4$, $soc(N)=S_1$ and $top(M)=S_5$, then by the  similar argument as in the proof of the Proposition \ref{p1}, the valued quiver of $\Lambda$ has a subquiver of the form
$$\hskip .5cm \xymatrix@-4mm{
&&{3}\ar @{<-}[dl]\\
{4}\ar @{<-}[r]&{5}\ar[dr]\\
&&{2}}\hskip .5cm$$
Therefore  there exists  a non-local indecomposable right $\Lambda$-module $L$ of length $5$. By  \cite[Corollary 2.8]{NS}, $L$  is $5$-factor serial which gives a contradiction. If $rad(M)=N \oplus S_3$ where $N$ is a $2$-factor serial right $\Lambda$-module of length $3$, $top(N)=S_4$ and $rad(N)=soc(N)=S_1\oplus S_2$.  The similar argument as in the  proof of  Proposition \ref{p1} shows that the  valued quiver of  $\Lambda$ has a subquiver of the form
$$\hskip .5cm \xymatrix@-4mm{
&&{1}\ar @{<-}[dl]\\
{5}\ar[r]&{4}\ar [dr]\\
&&{2}}\hskip .5cm$$

which gives a contradiction. Therefore $l(soc(M))\leq 2$ and the result follows.
\end{proof}

\begin{proposition}\label{p3}
Let $\Lambda$ be a right $4$-Nakayama artin algebra and $M$ be an indecomposable $4$-factor serial right $\Lambda$-module. Then the following statements hold.
\item[$(a)$] If $M$ is local and $rad(M)$ is indecomposable, then $soc(M)$ is simple.
\item[$(b)$] If $M$ is non-local, then  $l(top(M))=2$.
\end{proposition}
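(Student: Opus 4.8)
My plan is to dispose of the non-local statement $(b)$ first, since it is immediate from Lemma \ref{l1}, and then to reduce $(a)$ to it. For $(b)$, suppose $M$ is non-local and $4$-factor serial. Lemma \ref{l1}$(b)(ii)$ gives $l(M)=4$ and $ll(M)\in\{2,3\}$, while non-locality gives $l(top(M))\geq 2$, so I would only need to rule out $l(top(M))\geq 3$. If $l(top(M))=4$ then $M$ is semisimple with $ll(M)=1$, which is excluded. If $l(top(M))=3$ then $l(rad(M))=1$, so $rad(M)$ is simple; this forces $rad^{2}(M)=0$, and since $M$ is indecomposable and not simple no simple submodule can split off, so $soc(M)=rad(M)$. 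Then $M$ would be non-local with simple socle and $ll(M)=2$, contradicting the last clause of Lemma \ref{l1}$(b)(ii)$, which forces $ll(M)=3$ whenever the socle is simple. Hence $l(top(M))=2$.

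For $(a)$, I would assume $M$ is local and $4$-factor serial with $rad(M)$ indecomposable. By Proposition \ref{p1}$(a)$, $l(M)=5$, and by \cite[Lemma 2.2]{NS1}, $M$ is projective; write $top(M)=S_1$. By \cite[Theorem 2.6]{NS}, $rad(M)$ is $4$-factor serial, and by Lemma \ref{l1}$(b)(i)$, $rad^{r-4}(M)=rad(M)$ is not local, so $rad(M)$ is a non-local indecomposable $4$-factor serial module. Applying Lemma \ref{l1}$(b)(ii)$ to it yields $l(rad(M))=4$, $ll(rad(M))\in\{2,3\}$, and $soc(rad(M))$ simple if and only if $ll(rad(M))=3$. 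Since $M$ is local and not simple, every simple submodule lies in $rad(M)$, so $soc(M)=soc(rad(M))$, and Proposition \ref{p2} gives $l(soc(M))\leq 2$. Thus the whole of $(a)$ reduces to excluding $ll(rad(M))=2$, equivalently $l(soc(M))=2$.

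To exclude $ll(rad(M))=2$ I would argue as in Propositions \ref{p1} and \ref{p2}. In this case part $(b)$ gives $l(top(rad(M)))=2$; writing $top(rad(M))=S_2\oplus S_3$ produces arrows $1\to 2$ and $1\to 3$ in the valued quiver of $\Lambda$, and $rad(M)=x_2\Lambda+x_3\Lambda$ for top lifts $x_2,x_3$ of $S_2,S_3$. Indecomposability of $rad(M)$ forces $x_2\Lambda\cap x_3\Lambda\neq 0$, so the two local modules $x_i\Lambda$ share a socle factor; since $l(soc(rad(M)))=2$, this forces one of them, say $x_2\Lambda$, to have socle of length $2$, i.e. two arrows $2\to 4$ and $2\to 5$. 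Together with $1\to 2$ this yields in the valued quiver of $\Lambda$ a subquiver of the form
$$\hskip .5cm \xymatrix@-4mm{
&&{4}\ar @{<-}[dl]\\
{1}\ar [r]&{2}\ar [dr]\\
&& {5}}\hskip .5cm$$
and, by the argument of Proposition \ref{p1}, a non-local indecomposable right $\Lambda$-module of length $5$, which by \cite[Corollary 2.8]{NS} is $5$-factor serial --- contradicting that $\Lambda$ is right $4$-Nakayama. Hence $ll(rad(M))=3$ and $soc(M)=soc(rad(M))$ is simple.

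The first two paragraphs are routine bookkeeping with the already-established structure theory, so the genuine difficulty is the last step. The hard part will be passing from the bare indecomposability of $rad(M)$ to the forbidden subquiver and then to an \emph{honest} non-local module of length $5$: one must control how the two arms $x_2\Lambda$ and $x_3\Lambda$ are glued along the socle, exactly the relation-bookkeeping carried out in the proofs of Propositions \ref{p1} and \ref{p2}, so that the claimed length-$5$ module is genuinely present and genuinely non-local.
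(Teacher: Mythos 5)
The gap is in part $(b)$, and it matters because your part $(a)$ invokes part $(b)$ for $rad(M)$. Your entire proof of $(b)$ consists of citing the ``moreover'' clause of Lemma \ref{l1}$(b)(ii)$ in the direction ``$soc(M)$ simple $\Rightarrow$ $ll(M)=3$''. That direction is precisely the assertion of $(b)$ in disguise, and it cannot be used as a black box: Lemma \ref{l1} is stated for arbitrary artin algebras, and in that generality the direction you use is false. Over the path algebra of the quiver with three arrows $1\to 4$, $2\to 4$, $3\to 4$, the indecomposable ``star'' module $N$ with $top(N)=S_1\oplus S_2\oplus S_3$ and $rad(N)=soc(N)=S_4$ is non-local of length $4$ (hence $4$-factor serial, by \cite[Corollary 2.8]{NS} or directly from the definition), has simple socle, and has $ll(N)=2$. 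Note that the paper's proof of Lemma \ref{l1} never establishes your direction either: in the non-local case it says only that $r=4$ ``and the result follows'', which justifies $ll(M)\in\{2,3\}$ and the easy direction (socle non-simple $\Rightarrow$ $ll=2$, since $ll=3$ forces $rad(M)$ to be uniserial of length $2$ and the socle to be simple), but not the converse. So the clause you lean on is true only over right $4$-Nakayama algebras, and its truth there is exactly the content of $(b)$: your argument is circular, and the module it must exclude is exactly the star module above.

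What is actually needed to rule out $l(top(M))=3$ is a representation-theoretic argument, which is what the paper supplies: take a projective cover $f=(f_1,f_2,f_3)\colon P_1\oplus P_2\oplus P_3\to M$; if $S_4$ were not a direct summand of $top(rad(P_i))$ for some $i$, then $rad(P_i)\subseteq \ker f_i$, so $Im\, f_i$ would be simple, hence contained in $soc(M)=rad(M)$, contradicting that a projective cover induces an isomorphism on tops. Hence the valued quiver acquires arrows $1\to 4$, $2\to 4$, $3\to 4$, and this $D_4$-subspace configuration forces a non-local indecomposable module of length $5$ (the maximal-root-type module with top of length $3$ and two composition factors isomorphic to $S_4$), which is $5$-factor serial by \cite[Corollary 2.8]{NS}, contradicting that $\Lambda$ is right $4$-Nakayama. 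Your part $(a)$, by contrast, is essentially the paper's own argument (projectivity and $l(M)=5$, lifting the two top factors of $rad(M)$, using indecomposability of $rad(M)$ to force both socle factors onto a single branch, and then the forbidden subquiver $1\to 2$, $2\to 4$, $2\to 5$ producing a $5$-factor serial module); it becomes complete once $(b)$ is proved by a genuine argument rather than by citation of Lemma \ref{l1}.
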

\begin{proof} $(a).$ By \cite[Lemma 2.3]{NS1}, $M$ is projective and by the Proposition \ref{p1}, $l(M)=5$. By the Proposition \ref{p2}, $l(soc(M))\leq 2$ and by \cite[Theorem 2.6, Corollary 2.8]{NS}, $rad(M)$ is  non-local $4$-factor serial of length $4$. Assume that $soc(M)=S_1\oplus S_2$, $top(rad(M))=S_3\oplus S_4$ and $top(M)=S_5$, where $S_i$ is a simple $\Lambda$-module for each $1\leq i\leq 5$. Since $M$ is projective, by \cite[Proposition III.1.15]{ARS} there are one arrow from the vertex $5$ to the vertex $3$ and one arrow from the vertex $5$ to the vertex $4$ in the valued quiver of $\Lambda$. Since $top(rad(M))=S_3\oplus S_4$, $\frac{P_3\oplus P_4}{L}\cong rad(M)$ where $P_i$ is indecomposable projective that $top(P_i)=S_i$ and $L$ is a submodule of $P_3\oplus P_4$. Then  $S_1\oplus S_2=soc(M)=\frac{rad(M)}{rad^2(M)}\subseteq\frac{rad(P_3\oplus P_4)}{rad^2(P_3\oplus P_4)}$. Since $rad(M)$ is indecomposable, $S_1\oplus S_2$ must be a direct summand of either $top(rad(P_3))$ or $top(rad(P_4))$. We can assume that $S_1\oplus S_2$ is a direct summand of $top(rad(P_3))$. By \cite[Proposition III.1.15]{ARS}, there are one arrow from the vertex $3$ to the vertex $2$ and one arrow from the vertex $3$ to the vertex $1$ in the valued quiver of $\Lambda$. Then the valued quiver of $\Lambda$ has a subquiver of the form
$$\hskip .5cm \xymatrix@-4mm{
&&{1}\ar @{<-}[dl]\\
{5}\ar[r]&{3}\ar [dr]\\
&&{2}}\hskip .5cm$$
which implies that there is a non-local indecomposable right $\Lambda$-module $N$ of length $5$. By \cite[Corollary 2.8]{NS}, $N$ is $5$-factor serial which gives a contradiction. Therefore $soc(M)$ is simple. \\
$(b)$. By Lemma \ref{l1}, $l(M)=4$. Assume on the contrary that $top (M)=S_1\oplus S_2\oplus S_3$ and $soc(M)=S_4$, where $S_i$ is a simple $\Lambda$-module for each $1\leq i\leq 4$. $M$ has a projective cover of the form
$f=(f_1, f_2, f_3):P_1\oplus P_2\oplus P_3 \rightarrow M$. Then
 $ \frac{P_1\oplus P_2\oplus P_3}{L}\cong M $, where $L=Ker(f)$ and $S_4=rad(M)\cong  \frac{rad(P_1)\oplus rad(P_2)\oplus rad(P_3)}{L}$. We claim that for each $1\leq i \leq 3 $, $S_4$ is a direct summand of $rad(P_i)$. Assume that $S_4$ is not a direct summand of $rad(P_1)$. Since $S_4=rad(M)$, $rad(P_1)\subset L$. We consider the $\Lambda$-homomorphism $f_1:P_1\rightarrow M$. Then $ \frac{P_1}{rad(P_1)}\cong Im f_1 \leq M$. Since $\frac{P_1}{rad(P_1)}\cong Im f_1$ is simple, $Im f_1$ is direct summand of $soc(M)=S_4$ and $Im f_1=S_4$ which gives a contradiction. The similar argument shows that $S_4$ is a direct summand of $rad(P_2)$ and $rad(P_3)$. So by \cite[Proposition III.1.15]{ARS}, the valued quiver of $\Lambda$ has a subquiver of the form
$$\hskip .5cm \xymatrix@-4mm{
{1}\ar [dr]\\
{2}\ar[r]&{4}\ar@{<-}[dl]\\
{3}}\hskip .5cm$$
 which gives a contradiction.
\end{proof}

The following theorem gives a classification of submodules of indecomposable modules over right $4$-Nakayama artin algebras.

\begin{theorem}\label{T2}
 Let $\Lambda$  be a right $4$-Nakayama artin algebra and $M$ be an indecomposable right $\Lambda$-module. Then the following statements hold.
 \item[$(a)$] If $M$ is a  $4$-factor serial right  $\Lambda$-module, then  one of the following situations hold:
 \begin{itemize}
\item[$(i)$] $M$ is a local and colocal. Submodules of $M$ are  $rad(M)$ which is indecomposable non-local $4$-factor serial, two indecomposable submodules $M_1$ and $M_2$ of length $3$ that $M_1$ is uniserial and $M_2$ is non-local  $3$-factor serial, two indecomposable uniserial submodules $N_1$ and $N_2$ of length $2$ which $N_1$ is a submodule of $M_1$ and $M_2$, and $N_2$ is a submodule of $M_2$  and $soc(M)=S$ which is simple.
\item[$(ii)$] $M$ is local and non-colocal. Submodules of $M$ are  $rad(M)=N\oplus S_1$ that $N$ is uniserial of length  $3$ and $S_1$ is a simple right $\Lambda$-module, uniserial submodule $N_1$ of  length $2$ which is a submodule of $N$ and $soc(M)=S_1\oplus S_2$ that $soc(N)=S_2$.
\item[$(iii)$] $M$ is local and non-colocal. Submodules of $M$ are $rad(M)=N_1\oplus N_2$ that for each $1\leq i \leq 2$, $N_i$ is uniserial of length $2$ and $soc(M)=S_1\oplus S_2$  where for each $1\leq i \leq 2$,  $soc(N_i)=S_i$.
\item[$(iv)$] $M$ is non-local of length $4$ that $soc(M)$ is simple. Submodules of $M$ are two submodules $M_1$ and $M_2$ of length $3$ that $M_1$ is uniserial and $M_2$ is non-local $3$-factor serial, two indecomposable uniserial submodules $N_1$ and $N_2$ of length $2$ that $N_1$ is a submodule of $M_1$ and $M_2$ and $N_2$ is a submodule of $M_2$ and $soc(M)=S$ which  is simple.
\item[$(v)$] $M$ is non-local of length $4$ that $soc(M)$ is non-simple. Submodules of $M$ are indecomposable modules $N_1$ of length $3$ which is $2$-factor serial, uniserial module $N_2$ of length $2$ and $soc(M)=S_1\oplus S_2$  where $S_i$ is a simple submodule of $M$ for each $1\leq i\leq 2$ and  $soc(N_2)=S_2$.
 \end{itemize}
 \item[$(b)$] If $M$ is a $3$-factor serial right $\Lambda$-module, then one of the following situations hold.
 \begin{itemize}
 \item[$(i)$] $M$ is local and colocal. Submodules of $M$ are $rad(M)$ which is non-local $3$-factor serial of length $3$, two  uniserial submodules $N_1$ and $N_2$ of length $2$ and $soc(M)=S$ which is simple.
 \item[$(ii)$] $M$ is local and $soc(M)$ is not  simple.  Submodules of $M$ are  $rad(M)=N\oplus S_1$ that $N$ is uniserial  of length $2$, $S_1$ is simple and $soc(M)=S_1\oplus S_2$ that $soc(N)=S_2$ which is simple.
 \item[$(iii)$] $M$ is non-local of length $3$. Submodules of $M$ are  two uniserial submodules $N_1$ and $N_2$ of length $2$ and $soc(M)=S$ which  is simple.
 \end{itemize}
 \item[$(c)$] If $M$ is a $2$-factor serial local module, then submodules of $M$ are $rad(M)=soc(M)=S_1\oplus S_2$ that $S_1$ and $S_2$ are simple right $\Lambda$-modules.
 \end{theorem}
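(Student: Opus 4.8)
The plan is to prove the three cases (a), (b), (c) independently, and within each to split according to whether $M$ is local and whether $soc(M)$ is simple; in every subcase I would first pin down the numerical data $l(M)$, $l(soc(M))$, $top(M)$ and the direct-sum decomposition of $rad(M)$ from the results already proved, and only afterwards read off the submodule lattice layer by layer.

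For part (a) let $M$ be $4$-factor serial. If $M$ is local, Proposition \ref{p1}(a) gives $l(M)=5$ and Proposition \ref{p2} gives $l(soc(M))\le 2$. Since $M$ is local of length $>1$, its socle sits inside the unique maximal submodule $rad(M)$, so $soc(rad(M))=soc(M)$; thus when $soc(M)$ is simple, $rad(M)$ has simple socle and is therefore indecomposable, and by \cite[Theorem 2.6, Corollary 2.8]{NS} it is non-local $4$-factor serial of length $4$ --- this is case (i). When $l(soc(M))=2$, the contrapositive of Proposition \ref{p3}(a) makes $rad(M)$ decomposable, and a length count (each summand having simple socle) forces either $rad(M)=N\oplus S_1$ with $N$ uniserial of length $3$ (case (ii)) or $rad(M)=N_1\oplus N_2$ with each $N_i$ uniserial of length $2$ (case (iii)). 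If $M$ is non-local, Lemma \ref{l1}(ii) gives $l(M)=4$ and Proposition \ref{p3}(b) gives $l(top(M))=2$; cases (iv) and (v) are then separated by whether $soc(M)$ is simple, which by Lemma \ref{l1}(ii) is the same as $ll(M)=3$ versus $ll(M)=2$.

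With the radical and socle data fixed, I would enumerate submodules by length: the length-$1$ submodules are exactly the simple summands of $soc(M)$; for local $M$ the unique maximal submodule is $rad(M)$, and for non-local $M$ the maximal submodules are read off from the decomposition of $top(M)$; the intermediate submodules are then obtained from these, using that every submodule of a uniserial module is uniserial and that a non-local $3$-factor serial module of length $3$ has precisely two length-$2$ uniserial submodules above its simple socle. The module-theoretic type asserted for each listed submodule (uniserial, $2$-factor serial, or non-local $3$-factor serial) would be verified by applying Lemma \ref{l1} and Proposition \ref{p1} to the submodule itself through its own radical. Parts (b) and (c) run on the same template with shorter filtrations: for $3$-factor serial $M$, Proposition \ref{p1}(b) gives $l(M)=4$ in the local case and \cite[Lemma 2.1]{NS1} gives $l(M)=3$ with simple socle in the non-local case, yielding (i)--(iii); for a $2$-factor serial local $M$, Proposition \ref{p1}(c) gives $l(M)=3$ and \cite[Lemma 5.1]{NS} gives $rad(M)=soc(M)=S_1\oplus S_2$, so (c) is immediate.

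The step I expect to be the main obstacle is not any single length count but the verification that the enumerated lists are complete and that each summand of $rad(M)$ has exactly the claimed type --- in particular excluding a non-local $3$-factor serial summand where a uniserial one is asserted. As in the proofs of Propositions \ref{p1}, \ref{p2} and \ref{p3}, the decisive device here is the valued-quiver argument: a forbidden summand or an extra submodule would, via \cite[Proposition III.1.15]{ARS}, force a subquiver carrying a non-local indecomposable right $\Lambda$-module of length $5$, which by \cite[Corollary 2.8]{NS} is $5$-factor serial, contradicting that $\Lambda$ is right $4$-Nakayama. Ruling out every such configuration and checking that precisely the listed submodules survive, each with its asserted type and socle, is the technical heart of the proof.
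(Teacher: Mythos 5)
Your proposal is correct and follows essentially the same route as the paper's own proof: pin down the numerical data with Lemma \ref{l1} and Propositions \ref{p1}--\ref{p3} (including the contrapositive of Proposition \ref{p3}(a) to force $rad(M)$ decomposable in the non-colocal local cases), then enumerate the submodule lattice and eliminate the forbidden types by the valued-quiver argument that produces a non-local indecomposable module of length $5$, contradicting right $4$-Nakayama-ness. The paper (which writes out only parts (a)(i) and (a)(v), declaring the rest similar) uses exactly these ingredients, differing only in minor bookkeeping: it invokes \cite[Theorem 5.2]{NS} to exclude $2$-factor serial maximal submodules where you apply Lemma \ref{l1}/Proposition \ref{p1} directly, and in case (a)(v) its contradiction is representation-infiniteness of a subquiver rather than a $5$-factor serial module, both instances of the same exclusion device you identify as the technical heart.
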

\begin{proof}
We prove only parts $(a)(i)$ and $(a)(v)$, the proof of the other parts is similar.
\item[$(a)(i).$] Since $M$ is a local $4$-factor serial right $\Lambda$-module, by the Proposition \ref{p1}, $l(M)=5$ and it implies that $l(rad(M))=4$. By \cite[Theorem 2.6]{NS}, $rad(M)$ is $4$-factor serial and so by \cite[Corollary 2.8]{NS}, $rad(M)$ is non-local indecomposable. By the Proposition \ref{p3}, $l(top(rad(M)))=2$. Assume that $top(rad(M))=S_1\oplus S_2$, where $S_1$ and $S_2$ are simple $\Lambda$-modules. We claim that $rad(M)$ has two maximal submodules. Since $rad(M)$ is non-local, $rad(M)$ has at least two maximal submodules $M_1$ and $M_2$ that $\frac{rad(M)}{M_1}\cong S_2$ and $\frac{rad(M)}{M_2}\cong S_1$. It implies that $S_1$ is a direct summand of $top(M_1)$, $S_2$ is a direct summand of $top(M_2)$ and we have an exact sequence
\begin{center}
 $0\longrightarrow rad^{2}(M)\longrightarrow rad(M)\longrightarrow \frac{ rad(M)}{rad^{2}(M)}\longrightarrow 0$
 \end{center}
Hence $l(rad^2 (M))=2$. $l(rad(M))=4$ and the length of any maximal submodule of $rad(M)$ is $3$, then $rad(M)$ has two maximal submodules $M_1$ and $M_2$. Since for every $1\leq i \leq 2$, $soc(M)=soc(M_i)$ which is simple, $M_i$ is indecomposable and by \cite[Theorem 5.2]{NS}, $M_1$ and $M_2$ are not $2$-factor serial.
If $M_i$ is local, then $M_i$ is uniserial and if $M_i$ is non-local, then $M_i$ is $3$-factor serial for $1\leq i \leq 2$.
 We show that $M_1$ is uniserial and $M_2$ is $3$-factor serial. Assume that $S_3=top(rad^2(M))$ and $S_4=soc(M)$, where $S_3$ and $S_4$ are simple. If $M_1$ and $M_2$ are uniserial, then $top(M_1)=S_1$, $top(M_2)=S_2$ and $P_1, P_2$ are projective covers of $M_1, M_2$, respectively. Since $rad^2(M)\subset M_i$, $S_3$ is a direct summand of $top(rad(P_i))$ for each $1\leq i \leq 2$. By \cite[Proposition III.1.15]{ARS}, there are one arrow from the vertex $1$ to the vertex $3$ and one arrow from the vertex $2$ to the vertex $3$ in the valued quiver of $\Lambda$. Also $P_3$ is the projective cover of $rad^2(M)$ and hence $S_4$ is a direct summand of $top(rad(P_3))$. By \cite[Proposition III.1.15]{ARS}, there is one arrow from the vertex $3$ to the vertex $4$ in the valued quiver of $\Lambda$. Then the valued quiver
  of $\Lambda$ has a subquiver of the form
$$\hskip .5cm \xymatrix@-4mm{
{1}\ar [dr]\\
&{3}\ar@{<-} [dl]\ar[r]&{4}\\
{2}}\hskip .5cm$$
Therefore, there exists a non-local indecomposable right $\Lambda$-module $L$ of length $5$. By \cite[Corollary 2.8]{NS}, $L$ is $5$-factor serial which gives a contradiction. Now assume that $M_1$ and $M_2$ are $3$-factor serial. Since $ \frac{rad(M)}{M_1}\cong S_2 $ and $\frac{rad(M)}{M_2}\cong S_1$, $S_2$ is not a direct summand of  $top(M_1)$ and $S_1$ is not a direct summand of $top(M_2)$. Since $rad(M_i)\subset M_i$ for every $1\leq i \leq 2$, $ top(M_1)=S_1\oplus S_3$ and $top(M_2)=S_2\oplus S_3$ and so $P_1\oplus P_3, P_2\oplus P_3$ are projective covers of $M_1$ and $M_2$, respectively. Then by \cite[Proposition III.1.15]{ARS}, the valued quiver of $\Lambda$ has a subquiver of the form
$$
\hskip .5cm \xymatrix@-4mm{
{1}\ar [dr]\\
{2}\ar[r]&{4}\ar@{<-} [dl]\\
{3}}\hskip .5cm
$$
which gives a contradiction. So one of the maximal submodules $M_1$ and $M_2$ is uniserial and the other one is $3$-factor serial.
\item[$(a)(v)$.] Assume that $M$ is non-local of length $4$ that $soc(M)$ is not simple. By the Proposition \ref{p3}, $l(top(M))=2$ and it implies that $rad(M)=soc(M)=S_3\oplus S_4$ and $top(M)=S_1\oplus S_2$, where $S_i$ is simple for each $i$. $M$ has two maximal submodules $M_1$ and $M_2$ of length $3$
that $rad(M)=soc(M)\subset M_i$ for each $1 \leq i \leq 2$, $ \frac{M}{M_1}\cong S_2 $, $ \frac{M}{M_2}\cong S_1 $ and by \cite[Lemma 5.2]{NS} if $M_i$ is indecomposable for each $1\leq i\leq 2$, then $M_i$ is $2$-factor serial. We show that only one of the $M_1$ and $M_2$ is indecomposable. Assume on the contrary that, $M_1$ and $M_1$ are not indecomposable. $P_1\oplus P_2$ is a projective cover of $M$ and so $\frac{P_1\oplus P_2}{L}\cong M$, where $L$ is a submodule of $P_1\oplus P_2$ and $\frac{rad(P_1\oplus P_2)}{L}\cong rad(M)\cong soc(M)=S_3\oplus S_4$. Therefore each $S_i$, $3\leq i\leq 4$ is a direct summand of one of the $top(rad(P_1))$ and $top(rad(P_2))$. Assume that $S_3$ is direct summand of $top(rad(P_1))$ and $S_4$ is direct summand of $top(rad(P_2))$. Since $M_1$ and $M_2$ are not indecomposable, $S_4$ is not a direct summand of $top(rad(P_1))$ and $S_3$ is not a direct summand of $top(rad(P_2))$. Then $M$ is a direct summand of two uniserial modules of length $2$ which gives a contradiction. Now assume that $M_1$ and $M_2$ are both indecomposable $2$-factor serial right
$\Lambda$-modules. Since for every $1\leq i \leq 2$, $S_1\oplus S_2=rad(M)\subset M_i$ and $P_1, P_2$ are projective covers of $M_1, M_2$, respectively. Then $S_3$ and $S_4$ are direct summands of $top(rad(P_i))$ for each $1\leq i \leq 2$ and by \cite[Proposition III.1.15]{ARS}, the valued quiver of $\Lambda$ has a subquiver of the form
$$\hskip .5cm \xymatrix@-4mm{
{1}\ar [dr]\ar [ddr]&&{2}\ar[dl]\ar [ddl]\\
&{3}\\
&{4}}\hskip .5cm$$
Therefore $\Lambda$ is not representation-finite which gives a contradiction. Then we can assume that $M_1$ is indecomposable and $2$-factor serial and $M_2=N\oplus S$ that $N$ is uniserial of length $2$ and $S$ is a simple right $\Lambda$-module.
  \end{proof}

Now by using the Theorem \ref{T2}, we give a characterization  of indecomposable non-projective modules over right $4$-Nakayama artin algebras.

 \begin{theorem}\label{T3}
 Let $\Lambda$ be a right $4$-Nakayama artin algebra and $M$ be an indecomposable right $\Lambda$-module. Then $M$ is non-projective if and only if one of the following situations hold.

 \begin{itemize}
 \item[$(A)$] $M$ is a factor module of an indecomposable projective right $\Lambda$-module $P$ which is uniserial and  $M\cong \frac{P}{rad^{t}(P)}$ for some $1\leq t< l(P)$.
 \item[$(B)$] $M$ is a factor module of an indecomposable projective right $\Lambda$-module $P$ which is a $2$-factor serial right $\Lambda$-module that  $rad(P)=soc(P)=S_1\oplus S_2$ and  $M$ is isomorphic to either $\frac{P}{rad(P)}$ or $ \frac{P}{S_i} $ for some $1\leq i \leq 2$.
 \item[$(C)$]  $M$ is a factor module of an indecomposable projective right $\Lambda$-module $P$ which is a  $3$-factor serial  colocal  right $\Lambda$-module such that  submodules of $P$ are  $rad(P)$ which is $3$-factor serial of length $3$ and non-local, two uniserial submodules $N_1$ and $N_2$ of length $2$ and $soc(P)=S$ which is simple. $M$ is isomorphic to either $\frac{P}{rad(P)}$ or $\frac{P}{N_i}$ for some $1\leq i \leq 2$ or $\frac{P}{S}$.
 \item[$(D)$]  $M$ is a factor module of an indecomposable projective right $\Lambda$-module $P$ which  is a  $3$-factor serial  non-colocal  right $\Lambda$-module such that submodules of $P$ are  $rad(P)=N\oplus S_1$ that $N$ is uniserial of length $2$, $S_1$ is simple and $soc(P)=S_1\oplus S_2$ that $soc(N)=S_2$. $M$ is isomorphic to either $\frac{P}{rad (P)}$ or $\frac{P}{N}$ or $\frac{P}{soc(P)}$ or $\frac{P}{S_i}$ for some $1\leq i \leq 2$.
 \item[$(E)$] $M$ is a factor module of an indecomposable projective right $\Lambda$-module $P$ which is  colocal  $4$-factor serial right $\Lambda$-module such that submodules of $P$ are  $rad(P)$ which  is $4$-factor serial  of length $4$ and non-local, two submodules $M_1$ and $M_2$ of length $3$ that $M_1$ is uniserial and $M_2$ is non-local  $3$-factor serial, two uniserial submodules $N_1$ and $N_2$ of length $2$ that $N_1$ is a submodule of $M_1$ and $M_2$ and $N_2$ is a submodule of $M_2$ and $soc(P)=S$ which is simple. $M$ is isomorphic to either  $\frac{P}{rad(P)}$ or $\frac{P}{M_i}$ for some $1\leq i \leq 2$ or $\frac{P}{N_i}$ for some $1\leq i \leq 2$ or $\frac{P}{S}$.
 \item[$(F)$] $M$ is a factor module of an indecomposable projective right $\Lambda$-module $P$ which is a non-colocal $4$-factor serial right $\Lambda$-module such that submodules of $P$ are  $rad(P)=N\oplus S_1$ that $N$ is uniserial of length  $3$ and $S_1$ is a simple right
 $\Lambda$-module, an indecomposable uniserial submodule $N_1$ of  length $2$ and $soc(P)=S_1\oplus S_2$ that $soc(N)=S_2$. $M$ is isomorphic to either $\frac{P}{rad(P)}$ or $ \frac{P}{N} $ or $ \frac{P}{N_1} $ or $ \frac{P}{N_1\oplus S_1} $ or  $\frac{P}{soc(P)}$ or $ \frac{P}{S_i} $ for some $1\leq i \leq 2$.
 \item[$(G)$]  $M$ is a factor module of an indecomposable projective right $\Lambda$-module $P$ which is a  non-colocal $4$-factor serial right $\Lambda$-module such that submodules of $P$ are  $rad(P)=N_1\oplus N_2$ that $N_i$ is uniserial of length $2$  for each $1\leq i \leq 2$ and $soc(P)=S_1\oplus S_2$  that $soc(N_i)=S_i$ for each $1\leq i \leq 2$. $M$ is isomorphic to either $ \frac{P}{rad(P)} $ or
 $ \frac{P}{N_i} $ for some $1\leq i \leq 2$ or $ \frac{P}{N_1\oplus S_2} $ or $ \frac{P}{N_2\oplus S_1} $  or $ \frac{P}{soc(P)} $ or $ \frac{P}{S_i} $ for some $1\leq i \leq 2$.
 \item[$(H)$] $M$ is a non-local  $3$-factor serial right $\Lambda$-module whose  submodules are  two uniserial modules $M_1$ and $M_2$ and $soc(M)=S$ which is simple. Indecomposable quotients of $M$ are $\frac{M}{M_i}$ for $1\leq i \leq 2$.
 \item[$(I)$] $M$ is a non-local $4$-factor serial right $\Lambda$-module with simple socle whose  submodules  are  two maximal  submodules $M_1$ and $M_2$ of length $3$ that $M_1$ is uniserial and $M_2$ is non-local $3$-factor serial, two uniserial submodules $N_1$ and $N_2$ of length $2$ which $N_1$ is a uniserial submodule of $M_1$ and $M_2$ and $N_2$ is a submodule of $M_2$ and $soc(M)=S$, which is simple. Indecomposable quotients of $M$ are $\frac{M}{M_i}$ for $1\leq i \leq 2$ and $\frac{M}{N_2}$.
 \item[$(J)$] $M$ is a non-local  $4$-factor serial right $\Lambda$-module that $soc(M)$ is not simple. Submodules of $M$ are, an indecomposable module $N_1$ of length $3$ which is $2$-factor serial, a uniserial module $N_2$ of length $2$ and $soc(M)=S_1\oplus S_2$ that $soc(N_2)=S_2$. Indecomposable quotients of $M$ are $ \frac{M}{N_i} $ for $1\leq i \leq 2$, $\frac{M}{N_2\oplus S_1} $ and $\frac{M}{S_1} $.
 \end{itemize}

  \end{theorem}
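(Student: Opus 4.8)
The plan is to prove the equivalence by separating the cases where $M$ is local and where $M$ is non-local, using Theorem \ref{T2} as the engine that supplies the submodule lattice of each relevant indecomposable module. The key structural fact I would lean on throughout is that an indecomposable projective right $\Lambda$-module is always local, since its top is simple.

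I would dispatch the backward direction first. If $M$ is as in $(H)$, $(I)$ or $(J)$ then $M$ is non-local, hence not isomorphic to any (necessarily local) indecomposable projective, so $M$ is non-projective. In cases $(A)$--$(G)$ the module $M$ is exhibited as $P/K$ with $P$ indecomposable projective and $K \neq 0$; since $P$ is local we have $K \subseteq rad(P)$, so $top(M) = top(P) = S_i$ is simple and $M$ is local, in particular indecomposable. Were $M$ projective it would be a projective cover of $S_i$, hence isomorphic to $P$, contradicting $l(M) = l(P) - l(K) < l(P)$. Thus each of $(A)$--$(J)$ forces $M$ to be non-projective.

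For the forward direction, assume $M$ is indecomposable and non-projective. Suppose first that $M$ is non-local. By Theorem \ref{t1} every indecomposable right $\Lambda$-module is $i$-factor serial for some $i \le 4$, so by Proposition \ref{p1} and Lemma \ref{l1} a non-local $M$ is either $3$-factor serial of length $3$, or $4$-factor serial of length $4$ with simple socle, or $4$-factor serial of length $4$ with non-simple socle. Feeding these three shapes into parts $(b)(iii)$, $(a)(iv)$ and $(a)(v)$ of Theorem \ref{T2} reproduces exactly the submodule descriptions of $(H)$, $(I)$ and $(J)$; the listed indecomposable quotients are then obtained by dividing out the maximal submodules named there and checking, as in the dual of the submodule analysis, that the resulting factors are indecomposable.

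Finally, suppose $M$ is local. Let $P \twoheadrightarrow M$ be its projective cover; because $M$ is local, $P$ is indecomposable, and because $M$ is non-projective its kernel $K$ is a nonzero submodule, necessarily contained in $rad(P)$. Since $P$ is an indecomposable local module it is $i$-factor serial for some $1 \le i \le 4$, and combining Propositions \ref{p1}, \ref{p2}, \ref{p3} with Theorem \ref{T2} pins $P$ down to exactly one of the seven shapes appearing in $(A)$--$(G)$: uniserial, $2$-factor serial, $3$-factor serial colocal or non-colocal, and $4$-factor serial colocal or one of the two non-colocal forms. For each shape I would read off the complete list of proper nonzero submodules from the lattice described in Theorem \ref{T2} and list the factors $M = P/K$; each such factor is local with top $top(P)$, hence automatically indecomposable. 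The load-bearing step, and the place I expect the most bookkeeping, is expanding the submodule lattices of the non-colocal $4$-factor serial projectives $(F)$ and $(G)$ --- recognizing that combinations such as $N_1 \oplus S_1$, $soc(P)$ and the individual simple summands are all genuine submodules --- and verifying that the enumeration of submodules, hence of factor modules, is exhaustive and free of repetitions. Once the submodule lattice of each of the seven projective types is in hand, matching $M = P/K$ against the lists in $(A)$--$(G)$ completes the proof.
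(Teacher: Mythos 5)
Your proposal is correct and takes essentially the paper's approach: the paper actually prints no proof of Theorem \ref{T3} at all, presenting it as an immediate consequence of Theorem \ref{T2} (``Now by using the Theorem \ref{T2}, we give a characterization\dots''), and your argument --- dispatching the non-local case by matching $M$ against parts $(a)(iv)$, $(a)(v)$, $(b)(iii)$ of Theorem \ref{T2}, and the local case via the indecomposable projective cover $P$, whose seven possible shapes and submodule lattices come from Propositions \ref{p1}--\ref{p3} and Theorem \ref{T2} --- is exactly that intended argument, worked out in more detail than the paper gives. The one point needing care, which you correctly flag, is that the submodule lists in Theorem \ref{T2} are stated informally (e.g.\ $N_1\oplus S_1$ in case $(F)$ and the analogous sums in case $(G)$ are not listed there explicitly, yet appear as kernels in Theorem \ref{T3}), so the enumeration of quotients must be checked to be exhaustive.
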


\section{almost split sequences of right $4$-Nakayama artin algebras}

Now we compute almost split sequences of right $4$-Nakayama artin algebras.

 \begin{theorem}\label{T4}
  Let $\Lambda$ be a right $4$-Nakayama artin algebra and $M$ be an indecomposable non-projective right $\Lambda$-module. Then one of the following situations hold.

\item[$(A)$] Assume that $M\cong \frac{P}{rad^i(P)}$, where $P$ is uniserial projective, $1\leq i <l(P)$ and $M$ is not isomorphic to the quotient of an indecomposable non-local $3$-factor serial module. Also assume that $M$ is not isomorphic to the either $\frac{L}{M_1}$ or $\frac{L}{N_2}$, where $L$ is an indecomposable colocal non-local $4$-factor serial, $M_1$ is a uniserial submodule of $L$ and $N_2$ is a uniserial submodule of $L$ which is not a submodule of $M_1$. Then the following sequence is an almost split sequence.
  \begin{center}
  $0\longrightarrow \frac{rad (P)}{rad^{i+1}(P)} \buildrel{
  \begin{bmatrix}
\pi_1\\
i_1
  \end{bmatrix}
   }\over\longrightarrow \frac{rad (P)}{rad^{i}(P)}\oplus \frac{P}{rad^{i+1}(P)} \buildrel{[- i_{2}, \pi_2]}\over\longrightarrow \frac{P}{rad^{i}(P)}\longrightarrow 0$
  \end{center}
 \item[$(B)$] Assume that $M$ is a factor of an indecomposable projective $2$-factor serial right $\Lambda$-module $P$. Submodules of $P$ are $rad(P)=soc(P)=S_1\oplus S_2$ that $S_i$ is a simple submodule of $P$ for each $1\leq i\leq 2$. Also assume that $P$ is not a submodule of an indecomposable non-local $4$-factor serial right $\Lambda$-module $L$ that $soc(L)$ is not simple.
 \begin{itemize}
 \item[$(i)$]If $M\cong \frac{P}{S_i}$ for some $1\leq i \leq 2$, then the sequence
 \begin{center}
 $0\longrightarrow S_i \buildrel{i_3}\over \longrightarrow P\buildrel{\pi_{3}}\over \longrightarrow \frac{P}{S_i}\longrightarrow0$
 \end{center}
 is an almost split sequence.

 \item[$(ii)$] If $M\cong \frac{P}{rad(P)}$, then the sequence
 \begin{center}
 $0\longrightarrow P\buildrel{
  \begin{bmatrix}
\pi_{4}\\
\pi_{5}
  \end{bmatrix}
   }\over \longrightarrow \frac{P}{S_1}\oplus \frac{P}{S_2}\buildrel{[-\pi_{6}, \pi_{7}]}\over \longrightarrow \frac{P}{rad (P)}\longrightarrow0$
  \end{center}
 is an almost split sequence.
 \end{itemize}

 \item[$(C)$] Assume that $M$ is a factor of an indecomposable projective $3$-factor serial colocal right $\Lambda$-module $P$. Submodules of $P$ are $rad(P)$ which is indecomposable non-local $3$-factor serial of length $3$, two uniserial modules $M_1$ and $M_2$ of length $2$ and $S=soc(P)$ which is simple.
 \begin{itemize}
\item[$(i)$] If $M\cong \frac{P}{rad(P)}$, then the sequence
 \begin{center}
 $ 0\longrightarrow\frac{P}{S}\buildrel{
  \begin{bmatrix}
\pi_{8}\\
\pi_{9}
  \end{bmatrix}
   }\over \longrightarrow \frac{P}{M_1}\oplus\frac{P}{M_2}\buildrel{[-\pi_{10}, \pi_{11}]}\over \longrightarrow \frac{P}{rad(P)}\longrightarrow0 $
 \end{center}
 is an almost split sequence.
 \item[$(ii)$]
 If $M\cong \frac{P}{M_i}$, for some $1\leq i\leq 2$, then the sequence
 \begin{center}
 $ 0\longrightarrow\frac{M_i}{S}\buildrel{i_4}\over \longrightarrow \frac{P}{S}\buildrel{\pi_{12}}\over \longrightarrow \frac{P}{M_i}\longrightarrow0 $
 \end{center}
 is an almost split sequence.

 \item[$(iii)$]
  If $M\cong \frac{P}{S}$, then the sequence
 \begin{center}
 $ 0\longrightarrow rad(P)\buildrel{
  \begin{bmatrix}
\pi_{13}\\
\pi_{14}\\
 i_{5}
  \end{bmatrix}
   }\over \longrightarrow\frac{M_1}{S}\oplus\frac{M_2}{S}\oplus P\buildrel{[-i_{6}, i_{7}, \pi_{15}]}\over \longrightarrow \frac{P}{S}\longrightarrow0 $
 \end{center}
 is an almost split sequence.
  \end{itemize}
 \item[$(D)$] Assume that $M$ is a factor of an indecomposable projective $3$-factor serial non-colocal right $\Lambda$-module $P$. Submodules of $P$ are $rad(P)=N\oplus S_1$ that $N$ is uniserial of length $2$, $S_1$ is simple and $soc(P)=S_1\oplus S_2$ that $soc(N)=S_2$.
\begin{itemize}
\item[$(i)$] If $M\cong \frac{P}{rad(P)}$, then the sequence
\begin{center}
$0\longrightarrow \frac{P}{S_2}\buildrel{
  \begin{bmatrix}
\pi_{16}\\
\pi_{17}
  \end{bmatrix}   }\over\longrightarrow\frac{P}{N}\oplus \frac{P}{soc(P)}\buildrel{[-\pi_{18}, \pi_{19}] }\over\longrightarrow \frac{P}{rad(P)}\longrightarrow0$
\end{center}
is an almost split sequence.
\item[$(ii)$] If $M\cong \frac{P}{N}$, then the sequence
\begin{center}
$ 0\longrightarrow \frac{N}{S_2}\buildrel{i_{8} }\over \longrightarrow \frac{P}{S_2}\buildrel{\pi_{20} }\over \longrightarrow \frac{P}{N}\longrightarrow 0$
\end{center}
is an almost split sequence.
\item[$(iii)$] If $M\cong \frac{P}{soc(P)}$, then the sequence
\begin{center}
$  0\longrightarrow P\buildrel{
  \begin{bmatrix}
\pi_{21}\\
\pi_{22}
  \end{bmatrix}
   }\over\longrightarrow \frac{P}{S_1}\oplus \frac{P}{S_{2}}\buildrel{[-\pi_{23}, \pi_{24}]}\over\longrightarrow \frac{P}{soc(P)}\longrightarrow0$
\end{center}
is an almost split sequence.
\item[$(iv)$] If $M\cong \frac{P}{S_1}$, then the sequence
\begin{center}
$ 0\longrightarrow S_1 \buildrel{i_{9} }\over \longrightarrow P\buildrel{\pi_{25} }\over \longrightarrow \frac{P}{S_1}\longrightarrow 0$
\end{center}
is an almost split sequence.
\item[$(v)$] If $M\cong \frac{P}{S_{2}}$, then the sequence
\begin{center}
$ 0\longrightarrow N\buildrel{  \begin{bmatrix}
\pi_{26}\\
 i_{10}
  \end{bmatrix}}\over \longrightarrow \frac{N}{S_2}\oplus P\buildrel{[-i_{11}, \pi_{27}]}\over \longrightarrow \frac{P}{S_{2}}\longrightarrow 0$
\end{center}
is an almost split sequence.
\end{itemize}
\item[$(E)$] Assume that $M$ is a factor of an indecomposable projective colocal $4$-factor serial right $\Lambda$-module $P$. Submodules of $P$ are $rad(P)$ that is non-local $4$-factor serial of length $4$, two submodules $M_1$ and $M_2$ of length $3$ that $M_1$ is uniserial and $M_2$ is non-local $3$-factor serial, two uniserial submodules $N_1$ and $N_2$ of length $2$ that $N_1$ is a submodule of $M_1$ and $M_2$, $N_2$ is a submodule of $M_2$ and $soc(P)=S$ which is simple.
\begin{itemize}
\item[$(i)$] If $M\cong \frac{P}{rad(P)}$, then the sequence
\begin{center}
$0\longrightarrow \frac{P}{N_1}\buildrel{  \begin{bmatrix}
\pi_{28}\\
\pi_{29}
  \end{bmatrix}}\over\longrightarrow \frac{P}{M_1}\oplus\frac{P}{M_2}\buildrel{[-\pi_{30}, \pi_{31}]}\over \longrightarrow\frac{P}{rad (P)}\longrightarrow 0$
\end{center}
 is an almost split sequence.
 \item[$(ii)$] If $M\cong \frac{P}{M_2}$, then the sequence
 \begin{center}
 $0\longrightarrow\frac{P}{S}\buildrel{  \begin{bmatrix}
\pi_{32}\\
\pi_{33}
  \end{bmatrix}}\over\longrightarrow \frac{P}{N_1}\oplus \frac{P}{N_2}\buildrel{[-\pi_{34}, \pi_{35}]}\over\longrightarrow \frac{P}{M_2}\longrightarrow0$
 \end{center}
  is an almost split sequence.
  \item[$(iii)$]  If $M\cong \frac{P}{M_1}$, then the sequence
  \begin{center}
$0\longrightarrow \frac{M_1}{N_1} \buildrel{i_{12}}\over \longrightarrow \frac{P}{N_1}\buildrel{\pi_{36}}\over\longrightarrow \frac{P}{M_1}\longrightarrow 0$
  \end{center}
  is an almost split sequence.
  \item[$(iv)$] If $M\cong \frac{P}{N_1}$, then the sequence
  \begin{center}
  $0\longrightarrow \frac{M_1}{S}\buildrel{  \begin{bmatrix}
\pi_{37}\\
i_{13}
  \end{bmatrix}}\over \longrightarrow \frac{M_1}{N_1}\oplus \frac{P}{S}\buildrel{[-i_{14}, \pi_{38}]}\over\longrightarrow \frac{P}{N_1}\longrightarrow 0$
  \end{center}
  is an almost split sequence.
  \item[$(v)$] If $M\cong \frac{P}{N_2}$, then the sequence
  \begin{center}
  $0\longrightarrow \frac{N_2}{S}\buildrel {i_{15}}\over \longrightarrow \frac{P}{S}\buildrel {\pi_{39}}\over\longrightarrow \frac{P}{N_2}\longrightarrow 0$
  \end{center}
  is an almost split sequence.
  \item[$(vi)$] If $M\cong \frac{P}{S}$, then the sequence
  \begin{center}
  $0\longrightarrow rad(P)\buildrel{  \begin{bmatrix}
i_{16}\\
\pi_{40}\\
\pi_{41}
  \end{bmatrix}}\over\longrightarrow P\oplus \frac{M_1}{S} \oplus\frac{N_2}{S}\buildrel{[\pi_{42}, -i_{17}, i_{18}]}\over \longrightarrow \frac{P}{S} \longrightarrow 0 $
  \end{center}
  is an almost split sequence.
\end{itemize}
\item[$(F)$] Assume that $M$ is a factor of an indecomposable projective non-colocal $4$-factor serial right $\Lambda$-module $P$. Submodules of $P$ are $rad(P)=N\oplus S_1$ that $N$ is a uniserial module of length $3$ and $S_1$ is a simple right $\Lambda$-module, an indecomposable uniserial submodule $N_1$ of length $2$ and $soc(P)=S_1\oplus S_2$ that $S_2=soc(N)$.
\begin{itemize}
\item[$(i)$] If $M\cong \frac{P}{rad (P)}$, then the sequence
\begin{center}
$0\longrightarrow \frac{P}{N_1}\buildrel{  \begin{bmatrix}
\pi_{43}\\
\pi_{44}
  \end{bmatrix}}\over\longrightarrow \frac{P}{N_1\oplus S_1}\oplus \frac{P}{N}\buildrel {[-\pi_{45}, \pi_{46}]}\over\longrightarrow\frac{P}{rad (P)}\longrightarrow 0$
\end{center}
is an almost split sequence.
\item[$(ii)$] If $M\cong \frac{P}{N}$, then the sequence
\begin{center}
$0\longrightarrow \frac{N}{N_1} \buildrel {i_{19}}\over \longrightarrow \frac{P}{N_1}\buildrel{\pi_{47}}\over \longrightarrow \frac{P}{N} \longrightarrow 0$
\end{center}
is an almost split sequence.
\item[$(iii)$] If $M\cong \frac{P}{N_1\oplus S_1}$, then the sequence
\begin{center}
$0\longrightarrow \frac{P}{S_2}\buildrel{  \begin{bmatrix}
\pi_{48}\\
\pi_{49}
  \end{bmatrix}}\over\longrightarrow \frac{P}{soc(P)}\oplus \frac{P}{N_1}\buildrel {[-\pi_{50}, \pi_{51}]}\over\longrightarrow   \frac{P}{N_1\oplus S_1}\longrightarrow 0$
\end{center}
is an almost split sequence.
\item[$(iv)$] If $M\cong \frac{P}{ soc(P)}$, then the sequence
\begin{center}
$0\longrightarrow P \buildrel{  \begin{bmatrix}
\pi_{52}\\
\pi_{53}
  \end{bmatrix}}\over\longrightarrow \frac{P}{S_1}\oplus \frac{P}{S_2}\buildrel {[-\pi_{54}, \pi_{55}]}\over \longrightarrow \frac{P}{ soc(P)} \longrightarrow 0$
\end{center}
is an almost split sequence.
\item[$(v)$] If $M\cong \frac{P}{ N_1}$, then the sequence
\begin{center}
$0\longrightarrow \frac{N}{S_2}\buildrel{  \begin{bmatrix}
i_{20}\\
\pi_{56}
  \end{bmatrix}}\over\longrightarrow \frac{P}{S_2}\oplus \frac{N}{N_1} \buildrel{[-\pi_{57}, i_{21}]}\over\longrightarrow \frac{P}{ N_1} \longrightarrow 0 $
\end{center}
is an almost split sequence.
\item[$(vi)$] If $M\cong \frac{P}{S_1}$, then the sequence
\begin{center}
$0\longrightarrow S_1\buildrel{i_{22}}\over \longrightarrow P\buildrel{\pi_{58}}\over \longrightarrow \frac{P}{S_1}\longrightarrow 0$
\end{center}
is an almost split sequence.
\item[$(vii)$] If $ M\cong \frac{P}{S_2} $, then the sequence
\begin{center}
$0\longrightarrow N\buildrel{  \begin{bmatrix}
i_{23}\\
\pi_{59}
  \end{bmatrix}} \over \longrightarrow P\oplus \frac{N}{S_2}\buildrel{[-\pi_{60}, i_{24}]}\over\longrightarrow \frac{P}{S_2}\longrightarrow 0$
\end{center}
is an almost split sequence.
\end{itemize}
\item[$(G)$] Assume that $M$ is a factor of an indecomposable projective non-colocal $4$-factor serial right $\Lambda$-module $P$. Submodules of $P$ are $rad(P)=N_1\oplus N_2$ that for each $1\leq i \leq 2$, $N_i$ is uniserial of length $2$ and $soc(P)=S_1\oplus S_2$ that for each $1\leq i \leq 2$, $soc(N_i)=S_i$.
\begin{itemize}
\item[$(i)$] If $M\cong \frac{P}{rad (P)}$, then the sequence
\begin{center}
$0 \longrightarrow \frac{P}{soc (P)}\buildrel{  \begin{bmatrix}
\pi_{61}\\
\pi_{62}
  \end{bmatrix}} \over \longrightarrow \frac{P}{N_1\oplus S_2}\oplus \frac{P}{N_2\oplus S_1}\buildrel{[-\pi_{63}, \pi_{64}]}\over\longrightarrow \frac{P}{rad(P)}\longrightarrow 0$
\end{center}
is an almost split sequence.
\item[$(ii)$] If $M\cong \frac{P}{N_i\oplus S_j}$ for some $1\leq i,j \leq 2$ and $i\neq j$, then the sequence
\begin{center}
$0\longrightarrow \frac{P}{S_i}\buildrel{  \begin{bmatrix}
\pi_{65}\\
\pi_{66}
  \end{bmatrix}} \over\longrightarrow \frac{P}{soc(P)}\oplus \frac{P}{N_i}\buildrel{[-\pi_{67}, \pi_{68}]}\over \longrightarrow \frac{P}{N_i\oplus S_j}\longrightarrow 0 $
\end{center}
is an almost split sequence.
\item[$(iii)$] If $M\cong \frac{P}{N_i}$ for some $1\leq i \leq 2$, then the sequence
\begin{center}
$0 \longrightarrow \frac{N_i}{S_i}\buildrel{i_{25}}\over \longrightarrow \frac{P}{S_i}\buildrel {\pi_{69}}\over\longrightarrow   \frac{P}{N_i} \longrightarrow 0$
\end{center}
is an almost split sequence.
\item[$(iv)$] If $M\cong \frac{P}{soc( P)}$, then the sequence
\begin{center}
$0\longrightarrow P\buildrel{  \begin{bmatrix}
\pi_{70}\\
\pi_{71}
  \end{bmatrix}} \over \longrightarrow \frac{P}{S_1}\oplus \frac{P}{S_2}\buildrel{[-\pi_{72}, \pi_{73}]}\over\longrightarrow \frac{P}{soc (P)}\longrightarrow 0$
\end{center}
is an almost split sequence.
\item[$(v)$] If $M\cong \frac{P}{S_i}$ for some $1\leq i \leq 2$, then the sequence
\begin{center}
$0\longrightarrow N_i\buildrel{  \begin{bmatrix}
\pi_{74}\\
i_{26}
  \end{bmatrix}} \over \longrightarrow \frac{N_i}{S_i} \oplus P \buildrel{[-i_{27}, \pi_{75}]}\over \longrightarrow  \frac{P}{S_i} \longrightarrow 0$
\end{center}
is an almost split sequence.
\end{itemize}
\item[$(H)$] Assume that $M$ is an indecomposable non-local $3$-factor serial right $\Lambda$-module which is not isomorphic to the quotient of an indecomposable non-local non-colocal $4$-factor serial right $\Lambda$-module $L$. $M$ is of length $3$ and submodules of $M$ are two uniserial maximal submodules $M_1$ and $M_2$ of length $2$ and $rad (M)=soc(M)=S$ which is simple.
\begin{itemize}
\item[$(i)$] The exact sequence
\begin{center}
$ 0\longrightarrow S\buildrel{  \begin{bmatrix}
i_{29}\\
 i_{30}
  \end{bmatrix}}\over\longrightarrow M_{1}\oplus M_{2}\buildrel{ [-i_{31}, i_{32}]}\over \longrightarrow M\longrightarrow 0$
\end{center}
is an almost split sequence.
\item[$(ii)$] For an indecomposable right $\Lambda$-module $\frac{M}{M_j}$, $1\leq j \leq 2$, the exact sequence
\begin{center}
$0\longrightarrow M_j \buildrel{ i_{28} }\over\longrightarrow M \buildrel{ \pi_{76} }\over\longrightarrow \frac{M}{M_ {j}}\longrightarrow0  $
\end{center}
is an almost split sequence.
\end{itemize}
\item[$(I)$] Assume that $M$ is an indecomposable non-local $4$-factor serial right $\Lambda$-module that $soc(M)$ is simple. Submodules of $M$ are two indecomposable submodules $M_1$ and $M_2$ of length $3$ that $M_1$ is uniserial and $M_2$ is non-local, two uniserial submodules $N_1$ and $N_2$ of length $2$ that $N_1$ is a submodule of $M_1$ and $M_2$, $N_2$ is a submodule of $M_2$ and $soc(M)=S$ which is simple.
\begin{itemize}
\item[$(i)$] The exact sequence
\begin{center}
$0\longrightarrow N_1\buildrel{  \begin{bmatrix}
i_{33}\\
i_{34}
  \end{bmatrix}} \over \longrightarrow M_1\oplus M_2\buildrel{[-i_{35}, i_{36}]}\over\longrightarrow M\longrightarrow 0$
\end{center}
is an almost split sequence.
\item[$(ii)$] For an indecomposable module $\frac{M}{M_1}$, the exact sequence
$$0\longrightarrow M_1\buildrel{i_{37}}\over\longrightarrow M  \buildrel{\pi_{77}}\over \longrightarrow  \frac{M}{M_1}\longrightarrow 0$$
   is an almost split sequence.
\item[$(iii)$] For an indecomposable module $\frac{M}{N_2}$, the exact sequence
$$0\longrightarrow M_2\buildrel{\begin{bmatrix}
\pi_{78}\\
i_{38}
  \end{bmatrix}
   }\over\longrightarrow soc(\frac{M}{N_2})\oplus M \buildrel{[-i_{39}, \pi_{79}]}\over \longrightarrow  \frac{M}{N_2} \longrightarrow 0$$
   is an almost split sequence.
   \end{itemize}
\item[$(J)$] Assume that $M$ is an indecomposable non-local $4$-factor serial right $\Lambda$-module of length $4$ that $soc(M)$ is not simple. Submodules of $M$ are $M_1$ and $M_2$ that $M_1$ is $2$-factor serial of length $3$, $M_2$ is uniserial of length $2$ and $soc(M)=S_1\oplus S_2$ that $soc(M_2)=S_2$.
\begin{itemize}
\item[$(i)$] The sequence
\begin{center}
$0\longrightarrow S_2\buildrel{  \begin{bmatrix}
\pi_{80}\\
i_{40}
  \end{bmatrix}}\over \longrightarrow M_1\oplus M_2\buildrel{[-i_{41}, \pi_{81}]}\over\longrightarrow M\longrightarrow 0$
\end{center}
is an almost split sequence.
\item[$(ii)$] For an indecomposable module $\frac{M}{S_1}$ the sequence
\begin{center}
$0\longrightarrow M_1\buildrel{  \begin{bmatrix}
\pi_{82}\\
i_{42}
  \end{bmatrix}}\over \longrightarrow \frac{M_1}{S_1}\oplus M \buildrel{[-i_{43}, \pi_{83}]}\over \longrightarrow \frac{M}{S_1}\longrightarrow 0$
\end{center}
is an almost split sequence.

\item[$(iii)$] For an indecomposable module $\frac{M}{M_2}$, the sequence
  $$0\longrightarrow M_2\buildrel{i_{44}}\over \longrightarrow M \buildrel{\pi_{84}}\over \longrightarrow  \frac{M}{M_2}\longrightarrow 0 $$
  is an almost split sequence.
 \item[$(iv)$] For an indecomposable module $\frac{M}{M_2 \oplus S_1}$, the sequence
 $$0\longrightarrow M \buildrel{\begin{bmatrix}
\pi_{85}\\
\pi_{86}
  \end{bmatrix}}\over\longrightarrow \frac{M}{S_1}\oplus \frac{M}{M_2}\buildrel{[-\pi_{87}, \pi_{88}]}\over\longrightarrow \frac{M}{M_2 \oplus S_1} \longrightarrow 0$$
is an almost split sequence.
\item[$(v)$] For an indecomposable module $\frac{M}{M_1}$, the sequence
 $$0\longrightarrow \frac{M_1}{S_1}\buildrel{i_{45}}\over \longrightarrow \frac{M}{S_1} \buildrel{\pi_{89}}\over \longrightarrow  \frac{M}{M_1}\longrightarrow 0 $$
is an almost split sequence.

\end{itemize}
Where $i_j $ is an inclusion for each $1\leq j \leq 45$ and $\pi_j $ is a canonical epimorphism for each $1\leq j \leq 89$.
\end{theorem}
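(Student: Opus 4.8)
The plan is to prove Theorem \ref{T4} by running through the ten families (A)--(J) of indecomposable non-projective modules produced in Theorem \ref{T3}, exhibiting in each case the asserted short exact sequence built from the canonical inclusions $i_j$ and projections $\pi_j$, and checking that it is almost split. The verification in every case rests on the standard criterion (see \cite{ARS}): a non-split short exact sequence $0\to A\xrightarrow{f} B\xrightarrow{g} C\to 0$ with $A$ and $C$ indecomposable is almost split if and only if $g$ is right almost split, that is, $g$ is not a retraction and every morphism $h\colon X\to C$ which is not a retraction factors through $g$. Because $\Lambda$ is right $4$-Nakayama, it is representation-finite, and all of its indecomposables are explicitly listed in Theorem \ref{T3} together with the projectives; hence the factorization condition reduces to a finite check against a known list of modules and the canonical morphisms between them supplied by the submodule descriptions of Theorem \ref{T2}.

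First I would dispose of case (A). Here $P$ is uniserial projective, so $M\cong\frac{P}{rad^{i}(P)}$ is uniserial and the displayed sequence is the classical Nakayama almost split sequence assembled from the two evident quotient maps; its left and right terms are uniserial, hence indecomposable, and the map $f$ lands in the radical, so the sequence is non-split. The two side hypotheses excluding $M$ from being a quotient of a non-local $3$-factor serial module or of the distinguished colocal $4$-factor serial module $L$ are precisely what guarantee that no further irreducible morphism reaches $M$, so that the middle term is exactly $\frac{rad(P)}{rad^{i}(P)}\oplus\frac{P}{rad^{i+1}(P)}$ and nothing larger.

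For the remaining families the end term $M$ is a factor of a projective $P$ (cases (B)--(G)) or a non-local $n$-factor serial module (cases (H)--(J)), and in either situation the full submodule lattice of the relevant module is supplied by Theorem \ref{T2}. The candidate sequences then fall into two shapes. The \emph{inclusion} type has a single indecomposable middle term $B$ and is $0\to A\xrightarrow{i_j} B\xrightarrow{\pi_j} B/A\to 0$, used when $M=B/A$ receives a single irreducible map, from $B$; here exactness is automatic, the indecomposability of $A=\tau M$ and of $M$ is read off from the classification, and non-splitting is clear. The \emph{mesh} type, with two summands in the middle, is realized as the pullback of two canonical epimorphisms onto $M$: the column map $\left[\begin{smallmatrix}\pi\\ \pi\end{smallmatrix}\right]$ and the row map $[-\pi,\pi]$ compose to zero and yield a short exact sequence directly from the inclusions among the submodules of Theorem \ref{T2} (the left term being the quotient of $P$ by the intersection of the two submodules), so that exactness and non-splitting are again immediate.

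The heart of the argument, and the step I expect to be the main obstacle, is verifying that $g$ is right almost split in the mesh cases, where the middle term splits into two (or, as in (I)(iii) and the $soc$-involving subcases, differently shaped) summands. One must show that the canonical quotient maps out of the middle term generate, modulo morphisms that factor through proper submodules, every non-retraction into $M$, and in particular that no third irreducible morphism into $M$ has been overlooked. This is exactly where the excluded-configuration hypotheses are used: conditions such as ``$M$ is not isomorphic to a quotient of a non-local non-colocal $4$-factor serial module $L$'' rule out an extra source of an irreducible map that would otherwise enlarge the middle term, while the local-type bound of Lemma \ref{l2} together with Proposition \ref{p2} and Proposition \ref{p3} restricts the Loewy and socle structure so that only the listed maps can occur. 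Once the factorization is confirmed for each indecomposable $X$ on the finite list, uniqueness of almost split sequences identifies $A$ with $\tau M$ and completes each case; the most delicate bookkeeping is matching every quotient $P/K$, as $K$ ranges over the lattice described in Theorem \ref{T2}, with exactly one almost split sequence in the above list.
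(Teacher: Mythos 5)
Your plan follows essentially the same route as the paper's own proof: the paper likewise dismisses exactness, non-splitness and indecomposability of the end terms as routine, reduces everything to showing the right-hand maps are right almost split, and verifies this by factoring an arbitrary non-isomorphism $\nu\colon V\to M$ through the middle term using the classification of Theorems \ref{T2} and \ref{T3} together with the excluded-configuration hypotheses (an epimorphism $\nu$ would force $M$ to be a quotient of a forbidden $4$-factor serial module, and for simple targets one shows the relevant projective cover is uniserial via the valued-quiver argument), carrying this out in detail only for cases $(H)$--$(J)$ and declaring $(A)$--$(G)$ easy since there $M$ is a quotient of a projective. One minor correction: in case $(A)$ your non-splitness argument (``$f$ lands in the radical'') is not literally true, because the component $\pi_1$ maps onto $\frac{rad(P)}{rad^{i}(P)}$; non-splitness follows instead from Krull--Schmidt, since the middle term has uniserial summands of lengths $i-1$ and $i+1$ while $\frac{rad(P)}{rad^{i+1}(P)}\oplus\frac{P}{rad^{i}(P)}$ has two summands of length $i$.
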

\begin{proof}
We only  prove the parts $(H)(i)$, $(H)(ii)$, $(I)(i)$, $(I)(ii)$, $(I)(iii)$, $(J)(i)$, $(J)(ii)$, $(J)(iii)$, $(J)(iv)$ and $(J)(v)$. In the other parts, since $M$ is a quotient of an indecomposable projective right $\Lambda$-module without any condition, the proof is easy. Put $g_1=[-i_{31}, i_{32}]$, $g_2=[-i_{35}, i_{36}]$,  $g_3=[-i_{39}, \pi_{79}]$,  $g_4=[-i_{41}, \pi_{81}]$,  $g_5=[-i_{43}, \pi_{83}]$ and $g_6=[-\pi_{87}, \pi_{88}]$. It is easy to see that all given sequences are exact, non-split and have indecomposable end terms. Then it is enough to show that homomorphisms $g_1$, $\pi_{76}$, $g_2$, $\pi_{77}$, $g_3$, $g_4$, $g_{5}$, $ \pi_{84} $, $g_6$ and $\pi_{89}$ are right almost split morphisms.
\item[$(H)(i).$] Let $V$ be an indecomposable right $\Lambda$-module and $\nu:V\longrightarrow M$ be a non-isomorphism. If $\nu$ is an epimorphism, then by \cite[Lemma 2.11]{NS}, $V$ is a $4$-factor serial right $\Lambda$-module and by the Theorem \ref{T3} $V$ is non-local non-colocal 4-factor serial. Then $M$ isomorphic to the quotient of an indecomposable non-local non-colocal 4-factor serial right $\Lambda$-module which is a contradiction. Thus $Im(\nu)$ is a proper submodule of $M$. Since $M_1$ and $M_2$ are maximal submodules of $M$, $Im(\nu)$ is a submodule of $M_1\oplus M_2$. Then there exists a homomorphism $h:V\longrightarrow M_1 \oplus M_2$ such that $\nu=g_1 h$.
\item[$(H)(ii).$] Let $V$ be an indecomposable right $\Lambda$-module and $\nu:V\longrightarrow \frac{M}{M_j}$ be a non-isomorphism. Since $\frac{M}{M_j}$ is a simple right $\Lambda$-module, $\nu$ is an epimorphism. If $j=1$, then $V$ is isomorphic to either $M$ or $M_2$ or a quotient of an indecomposable projective right $\Lambda$-module $P$. If $V$ is isomorphic to either $M$ or $M_2$, then there exists a homomorphism $h: V\longrightarrow M$ such that $\pi_{76} h=\nu$. Now assume that $V$ is isomorphic to the quotient of $P$. We show that in this case $P$ is uniserial. Let $ \frac{M}{M_1}\cong S_2 $, $top(M)=S_1\oplus S_2$ and $ rad(M)=soc(M)=S_3 $, then $P_1\oplus P_2$ is a projective cover of $M$ and $P=P_2$. By \cite[Proposition III.1.15]{ARS}, the valued quiver of $\Lambda$ has a subquiver of the form
$$\hskip .5cm \xymatrix{
{1}\ar  [dr]&&{2}\ar [dl] \\
&{3}
}\hskip .5cm$$
If $P_2$ is not uniserial, then by Theorem \ref{T2}, $top(rad(P_2))=S_3\oplus S_4$ and by \cite[Proposition III.1.15]{ARS}, the valued quiver of $\Lambda$ has a subquiver of the form
$$\hskip .5cm \xymatrix{
{1}\ar [dr]&&{2}\ar [dl] \ar  [dr]\\
&{3} &&{4}
}\hskip .5cm$$
Thus $M$ is isomorphic to the quotient of an indecomposable non-local non-colocal $4$-factor serial right $\Lambda$-module $L$, which gives a contradiction. Then $P$ is uniserial. Also $M_2$ is a submodule of $M$ and $top(\frac{P}{rad^{i}(P)})$ is direct summand of $top(M)$ for some $2\leq i < l(P)$. Therefore there exists a homomorphism $h: V\longrightarrow M$ such that $\pi_{76} h=\nu$.
\item[$(I)(i).$] The proof is similar to the proof of $(H)(i)$.
\item[$(I)(ii).$] Let $V$ be an indecomposable right $\Lambda$-module and $\nu:V\longrightarrow \frac{M}{M_1}$ be a non-isomorphism. Since $\frac{M}{M_1}$ is simple, $V$ is isomorphic to either $M$ or $M_2$ or $N_2$ or a quotient of an indecomposable projective right $\Lambda$-module $P$. The similar argument as in the proof of $(H)(ii)$ shows that $P$ is uniserial. Then there exists a homomorphism $h:V\longrightarrow M$ such that $\pi_{77}h=\nu$.
\item[$(I)(iii).$] Let $V$ be an indecomposable right $\Lambda$-module and $\nu:V\longrightarrow \frac{M}{N_2} $ be non-isomorphism. If $\nu$ is an epimorphism, then $V$ is isomorphic to either $M$ or $M_1$ or a quotient of an indecomposable projective right $\Lambda$-module $P$. The similar argument as in the proof of $(H)(ii)$ shows that $P$ is uniserial. Since $top(\frac{P}{rad^{i}(P)})\cong top(\frac{M}{N_2})$, there exists a homomorphism $h: V\longrightarrow soc( \frac{M}{N_2} )\oplus M$ such that $g_{3}h=\nu$. If $\nu$ is not an epimorphism, then $Im \nu=soc(\frac{M}{N_2})$ and so there exists a homomorphism $h: V\longrightarrow soc(\frac{M}{N_2})\oplus M$ such that $g_{3}h=\nu$.
\item[$(J)(i).$] The proof is similar to the proof of $(H)(i)$.
\item[$(J)(ii).$] Let $V$ be an indecomposable right $\Lambda$-module and $\nu: V \longrightarrow  \frac{M}{S_1}$ be a non-isomorphism. If $\nu$ is an epimorphism, then $V\cong M$ and there exists a homomorphism $h: V \longrightarrow \frac{M_1}{s_1}\oplus M $ such that $g_{5}h=\nu$. Now assume that $\nu$ is not an epimorphism. Maximal submodules of $\frac{M}{S_1}$ are $M_2$ and $\frac{M_1}{S_1}$, then there exists a homomorphism $h:V \longrightarrow \frac{M_1}{s_1}\oplus M$ such that $g_{5}h=\nu$.
\item[$(J)(iii).$] Let $V$ be an indecomposable right $\Lambda$-module and $\nu:V\longrightarrow \frac{M}{M_2} $ be a non-isomorphism. If $\nu$ is an epimorphism, then $V$ is isomorphic to either $M$ or $M_1$ or a quotient of an indecomposable projective right $\Lambda$-module $P$. The similar argument as in the proof of $(H)(ii)$ shows that $M_1$ is a quotient of $P$. Then there exists a homomorphism $h:V\longrightarrow M$ such that $\pi_{84} h=\nu$. Now assume that $\nu$ is not an epimorphism. Since $ Im(\nu)=soc(\frac{M}{M_2}) \subset soc(M)$, $Im(\nu)$ is a submodule of $M$ and so there exists a homomorphism $h:V\longrightarrow M$ such that $\pi_{84} h=\nu$.
\item[$(J)(iv).$] Let $V$ be an indecomposable right $\Lambda$-module and $\nu:V\longrightarrow \frac{M}{M_2 \oplus S_1} $ be a non-isomorphism. Since $ \frac{M}{M_2 \oplus S_1}$ is simple, $V$ is  isomorphic to either $M$  or $M_1$ or $ \frac{M}{S_1}$ or $ \frac{M}{M_2}$ or a quotient of an indecomposable projective right $\Lambda$-module $P$. The similar argument as in the proof of $(H)(ii)$ shows that $M_1$ is a quotient of $P$. then there exists a homomorphism $h:V\longrightarrow \frac{M}{S_1}\oplus \frac{M}{M_2}$ such that $g_6  h=\nu$.
\item[$(J)(v).$] The proof is similar to the proof of $(J)(iii)$.
\end{proof}

\section{quivers and relations of right 4-Nakayama finite dimensional $K$-algebras}

Let $\Lambda$ be a basic connected finite dimensional $K$-algebra. It is known that there exist a quiver $Q$ and an admissible ideal $I$ of the path algebra $KQ$ such that $\Lambda\cong\frac{KQ}{I}$. In this section, we give a necessary and sufficient conditions for the quiver $Q$ and the admissible ideal $I$ that $\frac{KQ}{I}$ be a right $4$-Nakayama algebra.\\
 A finite dimensional $K$-algebra $\Lambda=\frac{KQ}{I}$ is called special biserial algebra provided $(Q,I)$ satisfying the following conditions:
\begin{itemize}
\item[$(1)$] For any vertex $a\in Q_0$, $|a^+|\leq 2$ and   $|a^-|\leq 2$.
\item[$(2)$] For any arrow $\alpha \in Q_1$, there is at most one arrow $\beta$ and at most one arrow $\gamma$ such that $\alpha\beta$ and $\gamma\alpha$ are not in $I$.
\end{itemize}
Let $\Lambda=\frac{KQ}{I}$ be a special biserial finite dimensional $K$-algebra. A walk $w=c_1c_2\cdots c_n$ is called string of length $n$ if $c_i\neq c_{i+1}^{-1}$ for each $i$ and no subwalk nor its inverse is in $I$. In addition, we have strings of length zero, for any $a\in Q_0$ we have two strings of length zero, denoted by $1_{(a,1)}$ and $1_{(a,-1)}$. We have $s(1_{(a,1)})=t(1_{(a,1)})=s(1_{(a,-1)})=t(1_{(a,-1)})=a$ and $1_{(a,1)}^{-1}=1_{(a,-1)}$. A string $w=c_1c_2\cdots c_n$ with $s(w)=t(w)$ such that each
power $w^m$ is a string, but $w$ itself is not a proper power of any strings is called band. We denote by $\mathcal{S}(\Lambda)$ and $\mathcal{B}(\Lambda)$ the set of all strings of $\Lambda$ and the set of all bands of $\Lambda$, respectively. Let $\rho$ be the equivalence relation on $\mathcal{S}(\Lambda)$ which identifies every string $w$ with its inverse $w^{-1}$ and $\sigma$ be the equivalence relation on $\mathcal{B}(\Lambda)$ which identifies every band $w=c_1c_2\cdots c_n$ with the cyclically permuted bands $w_{(i)}=c_ic_{i+1}\cdots c_nc_1\cdots c_{i-1}$ and their inverses $w_{(i)}^{-1}$, for each $i$.
Butler and Ringle in \cite{BR} for each string $w$ defined a unique string module $M(w)$ and for each band $v$ defined a family of band modules $M(v,m,\varphi)$ with $m\geq 1$ and $\varphi\in Aut(K^m)$. Let $\widetilde{\mathcal{S}}(\Lambda)$ be the complete set of representatives of strings relative to $\rho$ and $\widetilde{\mathcal{B}}(\Lambda)$  be the complete set of representatives of bands relative to $\sigma$. Butler and Ringle in \cite{BR} proved that, the modules $M(w)$, $w\in \widetilde{\mathcal{S}}(\Lambda)$ and the modules $M(v,m,\varphi)$ with $v\in \widetilde{\mathcal{B}}(\Lambda)$, $m\geq 1$ and $\varphi\in Aut(K^m)$ provide complete list of pairwise non-isomorphic indecomposable $\Lambda$-modules.
Indecomposable right $\Lambda$-modules are either string modules or band modules or non-uniserial projective-injective modules (see \cite{BR} and \cite{WW}). If $\Lambda$ is a representation-finite special biserial algebra, then any indecomposable right $\Lambda$-module is either string or non-uniserial projective-injective.

\begin{proposition}\label{p4}
Any basic connected finite dimensional right $4$-Nakayama $K$-algebra is representation-finite special biserial.
\end{proposition}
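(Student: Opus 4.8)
The plan is to dispose of representation-finiteness immediately and then verify the two defining conditions of a special biserial algebra for the presentation $\Lambda\cong KQ/I$. Being right $4$-Nakayama, $\Lambda$ is representation-finite by \cite[Theorem 2.18]{NS}. Because $K$ is algebraically closed, the number of arrows $i\to j$ in $Q$ equals $\dim_K Ext^1_\Lambda(S_i,S_j)$, which by \cite[Proposition III.1.15]{ARS} is the multiplicity of $S_j$ as a summand of $rad(P_i)/rad^2(P_i)$; I use this dictionary between arrows and second radical layers throughout.

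For condition $(1)$, the bound on degrees, I first treat the out-degree. If $|a^+|\geq 3$, then $top(rad(P_a))=rad(P_a)/rad^2(P_a)$ has length at least $3$, so the local (hence indecomposable) module $P_a/rad^2(P_a)$ has $soc=rad$ of length at least $3$, contradicting Proposition \ref{p2}. For the in-degree, if $|a^-|\geq 3$, pick three distinct arrows $\gamma_1,\gamma_2,\gamma_3$ ending at $a$ and form the representation that is one-dimensional at $a$ and at each of the three sources, with each $\gamma_i$ acting as an isomorphism onto the copy of $S_a$ and every other arrow acting as zero. This is a colocal (hence indecomposable) module of length $4$ with simple socle $S_a$, top of length $3$, and Loewy length $2$; as a non-local $4$-factor serial module it contradicts Proposition \ref{p3}$(b)$, which forces the top of a non-local $4$-factor serial module to have length $2$ (equivalently, it violates the Loewy-length clause of Lemma \ref{l1}$(b)(ii)$, since a simple socle would force Loewy length $3$). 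Thus $|a^+|\leq 2$ and $|a^-|\leq 2$ at every vertex.

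For condition $(2)$, which involves the ideal $I$, suppose first that an arrow $\alpha\colon a\to b$ admits two arrows $\beta_1\colon b\to c_1$ and $\beta_2\colon b\to c_2$ with $\alpha\beta_1,\alpha\beta_2\notin I$. The full subquiver on $\{a,b,c_1,c_2\}$ then has underlying graph $D_4$ with center $b$, and I would write down its indecomposable representation for the highest root, of dimension $2$ at $b$ and $1$ at $a,c_1,c_2$, realized by placing the line $\mathrm{im}(\alpha)$ and the two lines $\ker(\beta_1),\ker(\beta_2)$ in general position in the two-dimensional space at $b$. The only paths of length $\geq 2$ supported on $\alpha,\beta_1,\beta_2$ are $\alpha\beta_1$ and $\alpha\beta_2$, both of which lie outside $I$, so no relation of $I$ acts nontrivially and the representation is a genuine $\Lambda$-module; it is indecomposable, non-local (its top is $S_a\oplus S_b$), and of length $5$, contradicting Theorem \ref{t1}. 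The symmetric situation, in which $\alpha\colon a\to b$ admits two arrows $\gamma_1,\gamma_2$ ending at $a$ with $\gamma_1\alpha,\gamma_2\alpha\notin I$, is handled by the same construction on the $D_4$ now centered at $a$, again producing a non-local indecomposable of length $5$ and contradicting Theorem \ref{t1}. This yields condition $(2)$ and finishes the proof.

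The step I expect to be the main obstacle is precisely the verification in condition $(2)$ that the length-$5$ witness is a module over $\Lambda$ itself, and not merely over the ambient hereditary algebra of type $D_4$: one must check that $I$ imposes no relation on the three chosen arrows, which is exactly what the hypotheses $\alpha\beta_i\notin I$ (respectively $\gamma_i\alpha\notin I$) guarantee, together with the absence of any longer paths among those arrows. A secondary point requiring care is the degenerate case of parallel branching arrows (so $c_1=c_2$, or the two sources of $\gamma_1,\gamma_2$ coincide), where the $D_4$ picture degenerates; there one still gets a contradiction from a small forbidden module, for instance in the outgoing case $P_a$ acquires a local length-$4$ quotient with Loewy layers $S_a$, $S_b$, $S_{c_1}\oplus S_{c_2}$, which is $2$-factor serial and so contradicts Proposition \ref{p1}$(c)$.
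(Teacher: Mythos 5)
Your high-level strategy is sensible, and parts of it are sound: representation-finiteness from \cite[Theorem 2.18]{NS} is correct, the out-degree bound via Proposition \ref{p2} works, and in the generic cases your two constructions really do produce non-local indecomposables of length $4$ (resp.\ $5$) contradicting Proposition \ref{p3}(b) (resp.\ Theorem \ref{t1}). However, there is a genuine gap at precisely the step you identify as the main obstacle: the verification that your $D_4$-type representation is a $\Lambda$-module. Your justification is that the only paths of length $\geq 2$ supported on $\alpha,\beta_1,\beta_2$ are $\alpha\beta_1,\alpha\beta_2\notin I$. That is not enough, because an element of $I$ need not be supported on those three arrows: $I$ may contain a binomial relation $\alpha\beta_1-q$, where $q$ is a parallel path from $a$ to $c_1$ through \emph{other} vertices, with neither $\alpha\beta_1$ nor $q$ in $I$. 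On your representation $q$ acts as zero while $\alpha\beta_1$ acts nontrivially (its action is $\beta_1\circ\alpha\neq 0$ since $\mathrm{im}(\alpha)\neq\ker(\beta_1)$), so this relation does not vanish and your representation is simply not a module over $\Lambda$. Such relations cannot be excluded: relations $p-q\in I$ with $l(p)+l(q)\leq 5$ are exactly what right $4$-Nakayama algebras are permitted --- and, by condition (iv)(c) of Theorem \ref{T5} and the Remark following it, sometimes forced --- to have, as they produce the non-uniserial projective-injective modules. Concretely, take the commutative square $\alpha\beta_1=\delta\varepsilon$ (with $\delta:a\to d$, $\varepsilon:d\to c_1$) plus one extra arrow $\beta_2:b\to c_2$, and $I=\langle\alpha\beta_1-\delta\varepsilon\rangle$: here $\alpha\beta_1,\alpha\beta_2\notin I$, no change of presentation puts $\alpha\beta_2$ into $I$, and your representation is not a module over this algebra. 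A forbidden module does exist (e.g.\ a non-local indecomposable of length $6$ with dimension $2$ at $b$, obtained by extending your representation over the path $q=\delta\varepsilon$), but constructing it requires incorporating the parallel path into the module --- an idea your proof does not contain. The same objection applies to your symmetric case $\gamma_i\alpha\notin I$.

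Two smaller gaps of the same kind. First, in the in-degree argument for condition (1), your representation fails to be a $\Lambda$-module when one of the $\gamma_i$ is a loop at $a$: the loop would act as an isomorphism, yet some power of every loop lies in the admissible ideal $I$, so that power cannot act invertibly; loops are not excluded by representation-finiteness, so this case needs a separate construction (e.g.\ a nilpotent action on a two-dimensional space at $a$). Similarly, if two of the $\gamma_i$ are parallel your module has length $3$, not $4$; parallel arrows are in fact impossible in a representation-finite algebra (the radical-square-zero quotient would contain a Kronecker subcategory), but that argument appears nowhere in your proof. Second, your patch for the degenerate case $c_1=c_2$ assumes that $\alpha\beta_1$ and $\alpha\beta_2$ stay linearly independent modulo $I$, which is strictly stronger than $\alpha\beta_1,\alpha\beta_2\notin I$: if $\lambda\alpha\beta_1+\mu\alpha\beta_2\in I$ with $\lambda\mu\neq 0$, the claimed quotient of $P_a$ with third Loewy layer $S_{c_1}\oplus S_{c_2}$ does not exist. (For reference, the paper itself disposes of Proposition \ref{p4} by citing the analogous argument of \cite[Proposition 3.2]{NS1}, so there is no detailed in-paper proof to measure against; but as written your argument is incomplete at the points above.)
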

\begin{proof}
The proof is similar to the proof of \cite[Proposition 3.2]{NS1}.
\end{proof}
\begin{theorem}\label{T5}
Let $\Lambda=\frac{KQ}{I}$ be a basic, connected and finite dimensional $K$-algebra.
Then $\Lambda$ is a right $4$-Nakayama algebra if and only if $\Lambda$ is a representation-finite special biserial algebra that $\left( Q, I\right) $ satisfying the following conditions:
\begin{itemize}
\item[$(i)$] If there exist a walk $W$ and two different arrows $w_1$ and $w_2$ with the same target such that  $w_1^{+1}w_2^{-1}$ is a subwalk of $W,$ then $length(W)\leq 3$.
\item[$(ii)$] If there exist a walk $W$ and two different arrows $w_1$ and $w_2$ with the same source such that  $w_1^{-1}w_2^{+1}$ is a subwalk of $W,$ then $length(W)\leq 4$.

\item[$(iii)$] If  there exist two paths $p$ and $q$ with the same target and the same source such that  $p-q\in I$. Then $length(p)+length(q)\leq 5$.
\item[$(iv)$]  At least one of the following conditions hold:
\begin{itemize}
\item[$(a)$] There exist a walk $W$ of length $3$ and two different arrows $w_1$ and $w_2$ with the same target such that  $w_1^{+1}w_2^{-1}$ is a subwalk of $W$.
\item[$(b)$] There exist a walk $W$ of length $4$ and two different arrows $w_1$ and $w_2$ with the same source such that  $w_1^{-1}w_2^{+1}$ is a subwalk of $W$.
\item[$(c)$] There exist two paths $p$ and $q$ with the same target and the same source such that $p-q\in I$ and $length(p)+length(q)=5$.
\end{itemize}

\end{itemize}
\end{theorem}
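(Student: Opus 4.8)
The plan is to push everything through the combinatorics of string modules. By Proposition \ref{p4} a basic connected finite dimensional right $4$-Nakayama $K$-algebra is already representation-finite special biserial, so one implication of the ``representation-finite special biserial'' clause is free and the whole content of the theorem is the translation of the module-theoretic criterion of Theorem \ref{t1} into the conditions $(i)$--$(iv)$. Since $\Lambda$ is representation-finite special biserial, the results of Butler--Ringel \cite{BR} (see also \cite{WW}) show that every indecomposable right $\Lambda$-module is either a string module $M(w)$ attached to a reduced walk $w$ (no band modules occur, by representation-finiteness) or a non-uniserial projective-injective module. I first record the dictionary I need. For a walk $w=c_1\cdots c_n$ the module $M(w)$ has length $n+1$, one basis vector per visited vertex. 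A subwalk $w_1^{+1}w_2^{-1}$ with $w_1,w_2$ arrows of common target is an \emph{interior valley} and puts its middle vertex into $\mathrm{soc}(M(w))$; a subwalk $w_1^{-1}w_2^{+1}$ with common source is an \emph{interior peak} and puts its middle vertex into $\mathrm{top}(M(w))$. Since any two local maxima of a walk are separated by a local minimum, $M(w)$ is local (simple top) if and only if $w$ has no interior valley, and $M(w)$ is uniserial if and only if $w$ is monotone, i.e. has neither an interior peak nor an interior valley. Finally, a non-uniserial projective-injective $P$ is governed by a binomial relation $p-q\in I$ with $p,q$ parallel paths from $a$ to $b$, and then $P$ is local and colocal with $l(P)=\mathrm{length}(p)+\mathrm{length}(q)$ (the two branches share only their top $S_a$ and socle $S_b$).

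For the forward implication I assume $\Lambda$ is right $4$-Nakayama and verify $(i)$--$(iv)$ by contradiction against Theorem \ref{t1}, which says that every indecomposable of length greater than $5$ is uniserial and every indecomposable of length $5$ is local. If $(i)$ failed, a walk of length $\geq 4$ would contain an interior valley, giving a non-local string module of length $\geq 5$, contradicting Theorem \ref{t1}. If $(ii)$ failed, a walk of length $\geq 5$ would contain an interior peak, hence a non-uniserial (therefore also non-monotone) string module of length $\geq 6$, again contradicting Theorem \ref{t1}. If $(iii)$ failed, a binomial relation would give a non-uniserial projective-injective of length $l(P)\geq 6$, once more contradicting Theorem \ref{t1}. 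For $(iv)$: being right $4$-Nakayama produces a $4$-factor serial indecomposable $M$, which by Lemma \ref{l1} is either non-local of length $4$ or local of length $5$; tracing it through the dictionary and Theorem \ref{T2}, the non-local case comes from a length-$3$ walk with a valley (clause $(a)$), while the local length-$5$ case is either a mountain-shaped string module from a length-$4$ walk with a peak (clause $(b)$) or a projective-injective from a binomial relation with $\mathrm{length}(p)+\mathrm{length}(q)=5$ (clause $(c)$).

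For the converse I assume $\Lambda$ is representation-finite special biserial satisfying $(i)$--$(iv)$ and check the two length hypotheses of Theorem \ref{t1} on each indecomposable. If $M(w)$ has length greater than $5$, then $w$ has length $\geq 5$, so by $(i)$ it has no interior valley and by $(ii)$ no interior peak, hence is monotone and $M(w)$ is uniserial. If $M(w)$ has length exactly $5$, then $w$ has length $4$, so by $(i)$ it has no interior valley and $M(w)$ is local. For a non-uniserial projective-injective $P$, condition $(iii)$ gives $l(P)=\mathrm{length}(p)+\mathrm{length}(q)\leq 5$ and $P$ is local, so there is no non-uniserial projective-injective of length greater than $5$ and any of length $5$ is local. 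Thus every indecomposable of length greater than $5$ is uniserial and every indecomposable of length $5$ is local. Moreover, by $(iv)$ together with the dictionary, Lemma \ref{l1} and Proposition \ref{p1}, at least one of clauses $(a)$, $(b)$, $(c)$ produces a genuinely $4$-factor serial indecomposable, so $\Lambda$ is neither Nakayama nor right $2$- nor right $3$-Nakayama. Theorem \ref{t1} then yields that $\Lambda$ is right $4$-Nakayama.

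The hard part will be the dictionary itself: pinning down precisely, for both string modules and projective-injective modules, how ``local'', ``colocal'', ``uniserial'' and the factor-serial number are read off from peaks, valleys and parallel-path lengths, including the endpoint configurations and the separate treatment of the projective-injectives that are not string modules. In particular, verifying that each clause of $(iv)$ yields an indecomposable that is $4$-factor serial and not of smaller factor-serial type (using Lemma \ref{l1}, Proposition \ref{p1} and Theorem \ref{T2}) is where the care is needed; the rest reduces to the length bookkeeping above once the dictionary is in place.
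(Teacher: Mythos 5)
Your proposal is correct in substance, but it follows a genuinely different route from the paper, most visibly in the forward direction. The paper proves conditions $(i)$ and $(ii)$ by a brute-force enumeration: it lists every possible shape of a length-$3$ (resp.\ length-$4$) subwalk containing $w_1^{+1}w_2^{-1}$ (resp.\ $w_1^{-1}w_2^{+1}$), together with all one-edge extensions (some twenty-five diagrams in total), and exhibits in each case an indecomposable $5$-factor serial string module; it proves $(iii)$ by writing down two explicit strings built from the tails of $p$ and $q$; and it proves $(iv)$ by citing the quiver characterizations of right $\leq 3$-Nakayama algebras from \cite{NS} and \cite{NS1}. You instead derive $(i)$--$(iii)$ uniformly from the length criterion of Theorem \ref{t1} via the peak/valley dictionary (a violating walk or binomial relation yields a non-local module of length $\geq 5$ or a non-uniserial module of length $\geq 6$), and you prove $(iv)$ directly by tracing a $4$-factor serial module, via Lemma \ref{l1} and Proposition \ref{p1}, through the Butler--Ringel/Wald--Wasch\-busch trichotomy; your converse is close to the paper's (the paper enumerates the eight admissible string forms and assigns factor-serial types, which is your dictionary in explicit form, though the paper silently omits the non-uniserial projective-injectives there, which your argument handles via $(iii)$). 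What your approach buys is economy and a clear conceptual reason why the numerical bounds $3$, $4$, $5$ appear; what the paper's buys is explicit witnesses in every degenerate configuration (loops, coinciding vertices) and independence from the dictionary being stated as a lemma. Two caveats apply equally to both arguments, so they are not gaps chargeable to you, but you should make them explicit if you write this up: ``walk'' in $(i)$, $(ii)$, $(iv)$ must be read as \emph{string} (a reduced walk none of whose subpaths or inverse subpaths lies in $I$), since a walk running through a relation carries no string module and the statement is false for arbitrary walks; and $p-q\in I$ in $(iii)$, $(iv)(c)$ must be read as a genuine binomial relation with $p,q\notin I$, which is what guarantees both the paper's strings $q_{s-2}^{+1}q_{s-1}^{+1}q_s^{+1}p_t^{-1}$ and your projective with two branches of total length $\mathrm{length}(p)+\mathrm{length}(q)$ actually exist. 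Finally, the dictionary facts you defer (valley $\Leftrightarrow$ socle summand, peak $\Leftrightarrow$ top summand, local $\Leftrightarrow$ no interior valley, uniserial $\Leftrightarrow$ monotone, and maximality of the paths in a binomial relation forced by the special biserial condition) are standard consequences of \cite{BR} and \cite{WW} and are used just as silently by the paper, so deferring them leaves your argument at the same level of rigor as the published proof.
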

\begin{proof}
Let $\Lambda$ be a right $4$-Nakayama algebra. By Proposition \ref{p4}, $\Lambda$ is a special biserial algebra of finite type. Assume on the contrary that the condition $(i)$ does not hold. Then there exists a walk $W$ of length greater than or equal $4$ which has a subwalk $W^{'}$ of length $3$ that $W^{'}$ has a subwalk of the form
$w_{1}^{+1}w_{2}^{-1}$.
 Since $\Lambda$ is a special biserial algebra of finite type, the walk $W^{'}$ is one of the following forms.

 \begin{itemize}
 \item First case: The walk $W^{'}$ is of the form
   \begin{center}
$$\hskip .5cm \xymatrix{
&{1}\ar @{<-}[dl]_{w_1} \ar @{<-} [dr]^{w_2}&&{4}\ar [dl]_{w_3}\\
{2} &&{3}
}\hskip .5cm$$
\end{center}
In this case $W$ has a subwalk of one of the following forms:
\begin{itemize}
\item[$(i)$]
$$\hskip .5cm \xymatrix{
{a}\ar [dr]_{w_4}&&{1}\ar @{<-}[dl]_{w_1} \ar @{<-} [dr]^{w_2}&&{4}\ar [dl]_{w_3}\\
&{2} &&{3}
}\hskip .5cm$$
In this  case the vertex $a$ can be either $1$ or $2$  or $5$.

\item[$(ii)$]

$$\hskip .5cm \xymatrix{
{a}\ar @{<-} [dr]_{w_4}&&{1}\ar @{<-}[dl]_{w_1} \ar @{<-} [dr]^{w_2}&&{4}\ar [dl]_{w_3}\\
&{2} &&{3}
}\hskip .5cm$$

In this  case the vertex $a$ can be either $2$ or $3$ or $4$ or $5$.
\item[$(iii)$]
$$\hskip .5cm \xymatrix{
&{1}\ar @{<-}[dl]_{w_1} \ar @{<-} [dr]^{w_2}&&{4}\ar [dl]_{w_3}\ar  @{<-} [dr]^{w_4}\\
{2} &&{3}&&{a}
}\hskip .5cm$$
In this case the vertex $a$ can be either $1$  or $3$ or $4$ or $5$.
\item[$(iv)$]
$$\hskip .5cm \xymatrix{
&{1}\ar @{<-}[dl]_{w_1} \ar @{<-} [dr]^{w_2}&&{4}\ar [dl]_{w_3}\ar [dr]^{w_4}\\
{2} &&{3}&&{a}
}\hskip .5cm$$
In this case the vertex $a$ can be either $2$  or  $4$ or $5$.
\end{itemize}
 \item Second case: The walk $W^{'}$ is of the form
   \begin{center}
$$\hskip .5cm \xymatrix{
&{1}\ar @{<-}[dl]_{w_1} \ar @{<-} [dr]^{w_2}&&{3}\ar [dl]_{w_3}\\
{2} &&{3}
}\hskip .5cm$$
\end{center}
In this case $W$ has a subwalk of one of the following forms:
\begin{itemize}
\item[$(i)$]
$$\hskip .5cm \xymatrix{
{a}\ar [dr]_{w_4}&&{1}\ar @{<-}[dl]_{w_1} \ar @{<-} [dr]^{w_2}&&{3}\ar [dl]_{w_3}\\
&{2} &&{3}
}\hskip .5cm$$
In this  case the vertex $a$ can be either $1$ or $2$ or $4$.

\item[$(ii)$]

$$\hskip .5cm \xymatrix{
{a}\ar @{<-} [dr]_{w_4}&&{1}\ar @{<-}[dl]_{w_1} \ar @{<-} [dr]^{w_2}&&{3}\ar [dl]_{w_3}\\
&{2} &&{3}
}\hskip .5cm$$

In this  case the vertex $a$ can be either $2$  or $4$.
\item[$(iii)$]
$$\hskip .5cm \xymatrix{
&{1}\ar @{<-}[dl]_{w_1} \ar @{<-} [dr]^{w_2}&&{3}\ar [dl]_{w_3}\ar  @{<-} [dr]^{w_4}\\
{2} &&{3}&&{a}
}\hskip .5cm$$
In this case the vertex $a$ can be either  $1$ or $4$.
\end{itemize}
\item Third case: The walk $W^{'}$ is of the form
   \begin{center}
$$\hskip .5cm \xymatrix{
&{1}\ar @{<-}[dl]_{w_1} \ar @{<-} [dr]^{w_2}&&{1}\ar [dl]_{w_3}\\
{2} &&{3}
}\hskip .5cm$$
\end{center}
In this case $W$ has a subwalk of one of the following forms:
\begin{itemize}
\item[$(i)$]
$$\hskip .5cm \xymatrix{
&{1}\ar @{<-}[dl]_{w_1} \ar @{<-} [dr]^{w_2}&&{1}\ar [dl]_{w_3}\ar@{<-}[dr]^{w_i}\\\
{2} &&{3}&&{a}
}\hskip .5cm$$
for $i=1, 2$. If $i=1$, then $a=2$ and if $i=2$, then $a=3$.
\item[$(ii)$]
$$\hskip .5cm \xymatrix{
&{1}\ar @{<-}[dl]_{w_1} \ar @{<-} [dr]^{w_2}&&{1}\ar [dl]_{w_3}\ar[dr]^{w_4}\\
{2} &&{3}&&{a}
}\hskip .5cm$$
In this case the vertex $a$ can be either $2$ or $4$.
\item[$(iii)$]
$$\hskip .5cm \xymatrix{
{a}\ar@{<-}[dr]^{w_4}&&{1}\ar @{<-}[dl]_{w_1} \ar @{<-} [dr]^{w_2}&&{1}\ar [dl]_{w_3}\\
&{2} &&{3}
}\hskip .5cm$$
In this case the vertex $a$ can be either $2$ or $3$ or $4$.
\item[$(iv)$]
$$\hskip .5cm \xymatrix{
{a}\ar[dr]^{w_4}&&{1}\ar @{<-}[dl]_{w_1} \ar @{<-} [dr]^{w_2}&&{1}\ar [dl]_{w_3}\\
&{2} &&{3}
}\hskip .5cm$$
In this case the vertex $a$ can be either $2$ or $4$.
\end{itemize}
\item Forth case: The walk $W^{'}$ is of the form
$$\hskip .5cm \xymatrix{
&{1}\ar @{<-}[dl]_{w_1} \ar @{<-} [dr]^{w_2}&&{4}\ar @{<-} [dl]_{w_3}\\
{2} &&{3}
}\hskip .5cm$$

In this case $W$ has a subwalk of one of the following forms:
\begin{itemize}
\item[$(i)$]
$$\hskip .5cm \xymatrix{
&{1}\ar @{<-}[dl]_{w_1} \ar @{<-} [dr]^{w_2}&&{4}\ar @{<-} [dl]_{w_3}\ar[dr]^{w_4}\\
{2} &&{3}&&{a}
}\hskip .5cm$$
In this case the vertex $a$ can be either $2$ or $3$ or $4$ or $5$.
\item[$(ii)$]
$$\hskip .5cm \xymatrix{
&{1}\ar @{<-}[dl]_{w_1} \ar @{<-} [dr]^{w_2}&&{4}\ar @{<-} [dl]_{w_3}\ar @{<-} [dr]^{w_4}\\
{2} &&{3}&&{a}
}\hskip .5cm$$
In this case the vertex $a$ can be either $1$ or $4$ or $5$.
\item[$(iii)$]
$$\hskip .5cm \xymatrix{
{a}\ar[dr]^{w_4}&&{1}\ar @{<-}[dl]_{w_1} \ar @{<-} [dr]^{w_2}&&{4}\ar @{<-} [dl]_{w_3}\\
&{2} &&{3}&
}\hskip .5cm$$
In this case the vertex $a$ can be either $1$ or $2$ or $5$.
\item[$(iv)$]
$$\hskip .5cm \xymatrix{
{a}\ar@{<-} [dr]^{w_4}&&{1}\ar @{<-}[dl]_{w_1} \ar @{<-} [dr]^{w_2}&&{4}\ar @{<-} [dl]_{w_3}\\
&{2} &&{3}&
}\hskip .5cm$$
In this case the vertex $a$ can be either $2$ or $3$ or $5$.
\end{itemize}
\item Fifth case: The walk $W^{'}$ is of the form
   \begin{center}
$$\hskip .5cm \xymatrix{
&{1}\ar @{<-}[dl]_{w_1} \ar @{<-} [dr]^{w_2}&&{3}\ar@{<-} [dl]_{w_3}\\
{2} &&{3}
}\hskip .5cm$$
\end{center}
In this case $W$ has a subwalk of one of the following forms:
\begin{itemize}
\item[$(i)$]
$$\hskip .5cm \xymatrix{
&{1}\ar @{<-}[dl]_{w_1} \ar @{<-} [dr]^{w_2}&&{3}\ar@{<-} [dl]_{w_3}\ar@{<-} [dr]_{w_4}\\
{2} &&{3}&&{a}
}\hskip .5cm$$
In this case the vertex $a$ can be either $1$ or $4$.
\item[$(ii)$]
$$\hskip .5cm \xymatrix{
&{1}\ar @{<-}[dl]_{w_1} \ar @{<-} [dr]^{w_2}&&{3}\ar@{<-} [dl]_{w_3}\ar [dr]^{w_i}\\
{2} &&{3}&&{a}
}\hskip .5cm$$
for $i=2,3$. If $i=2$, then $a=1$ and if $i=3$, then $a=3$.
\item[$(iii)$]
$$\hskip .5cm \xymatrix{
{a}\ar[dr]^{w_4}&&{1}\ar @{<-}[dl]_{w_1} \ar @{<-} [dr]^{w_2}&&{3}\ar@{<-} [dl]_{w_3}\\
&{2} &&{3}
}\hskip .5cm$$
In this case the vertex $a$ can be either $1$ or $2$  or $4$.
\item[$(iv)$]
$$\hskip .5cm \xymatrix{
{a}\ar@{<-}[dr]^{w_4}&&{1}\ar @{<-}[dl]_{w_1} \ar @{<-} [dr]^{w_2}&&{3}\ar@{<-} [dl]_{w_3}\\
&{2} &&{3}
}\hskip .5cm$$
In this case the vertex $a$ can be either $2$ or $4$.
\end{itemize}

\item Sixth case: The walk $W^{'}$ is of the form
$$\hskip .5cm \xymatrix{
&{1}\ar @{<-}[dl]_{w_1} \ar @{<-} [dr]^{w_2}&&{2}\ar @{<-} [dl]_{w_3}\\
{2} &&{3}
}\hskip .5cm$$
In this case $W$ has a subwalk of one of the following forms:
\begin{itemize}
\item[$(i)$]
$$\hskip .5cm \xymatrix{
&{1}\ar @{<-}[dl]_{w_1} \ar @{<-} [dr]^{w_2}&&{2}\ar @{<-} [dl]_{w_3}\ar[dr]^{w_4}\\
{2} &&{3}&&{a}
}\hskip .5cm$$
In this case the vertex $a$ can be either $3$ or $4$.
\item[$(ii)$]
$$\hskip .5cm \xymatrix{
&{1}\ar @{<-}[dl]_{w_1} \ar @{<-} [dr]^{w_2}&&{2}\ar @{<-} [dl]_{w_3}\ar @{<-} [dr]^{w_4}\\
{2} &&{3}&&{a}
}\hskip .5cm$$
In this case the vertex $a$ can be either $1$ or $4$.
 \end{itemize}

\item Seventh case: The walk $W^{'}$ is of the form
$$\hskip .5cm \xymatrix{
&{1}\ar @{<-}[dl]_{w_1} \ar @{<-} [dr]^{w_2}&&{3}\ar[dl]_{w_3}\\
{1} &&{2}
}\hskip .5cm$$
In this case $W$ has a subwalk of one of the following forms:
\begin{itemize}
\item[$(i)$]
$$\hskip .5cm \xymatrix{
{1}\ar[dr]^{w_1}&&{1}\ar @{<-}[dl]_{w_1} \ar @{<-} [dr]^{w_2}&&{3}\ar[dl]_{w_3}\\
&{1} &&{2}
}\hskip .5cm$$
\item[$(ii)$]
$$\hskip .5cm \xymatrix{
{a}\ar@{<-}[dr]^{w_4}&&{1}\ar @{<-}[dl]_{w_1} \ar @{<-} [dr]^{w_2}&&{3}\ar[dl]_{w_3}\\
&{1} &&{2}
}\hskip .5cm$$
In this case the vertex $a$ can be either $2$ or $3$ or $4$.
\item[$(iii)$]
$$\hskip .5cm \xymatrix{
&{1}\ar @{<-}[dl]_{w_1} \ar @{<-} [dr]^{w_2}&&{3}\ar[dl]_{w_3}\ar[dr]^{w_4}\\
{1} &&{2}&&{a}
}\hskip .5cm$$
In this case the vertex $a$ can be either $3$ or $4$.
\item[$(iv)$]
$$\hskip .5cm \xymatrix{
&{1}\ar @{<-}[dl]_{w_1} \ar @{<-} [dr]^{w_2}&&{3}\ar[dl]_{w_3}\ar@{<-}[dr]^{w_4}\\
{1} &&{2}&&{a}
}\hskip .5cm$$
In this case the vertex $a$ can be either $2$ or $3$ or $4$.
\end{itemize}

\item Eighth case: The walk $W^{'}$ is of the form
$$\hskip .5cm \xymatrix{
&{1}\ar @{<-}[dl]_{w_1} \ar @{<-} [dr]^{w_2}&&{2}\ar[dl]_{w_3}\\
{1} &&{2}
}\hskip .5cm$$
In this case $W$ has a subwalk of one of the following forms:
\begin{itemize}
\item[$(i)$]
$$\hskip .5cm \xymatrix{
&{1}\ar @{<-}[dl]_{w_1} \ar @{<-} [dr]^{w_2}&&{2}\ar[dl]_{w_3}\ar@{<-}[dr]^{w_4}\\
{1} &&{2}&&{3}
}\hskip .5cm$$

\item[$(ii)$]
$$\hskip .5cm \xymatrix{
{1}\ar[dr]^{w_1}&&{1}\ar @{<-}[dl]_{w_1} \ar @{<-} [dr]^{w_2}&&{2}\ar[dl]_{w_3}\\
&{1} &&{2}
}\hskip .5cm$$
\item[$(iii)$]
$$\hskip .5cm \xymatrix{
{3}\ar@{<-}[dr]^{w_4}&&{1}\ar @{<-}[dl]_{w_1} \ar @{<-} [dr]^{w_2}&&{2}\ar[dl]_{w_3}\\
&{1} &&{2}
}\hskip .5cm$$

\end{itemize}
\item  Ninth case: The walk $W^{'}$ is of the form
$$\hskip .5cm \xymatrix{
&{1}\ar @{<-}[dl]_{w_1} \ar @{<-} [dr]^{w_2}&&{3}\ar@{<-}[dl]_{w_3}\\
{1} &&{2}
}\hskip .5cm$$
In this case $W$ has a subwalk of one of the following  forms:
\begin{itemize}
\item[$(i)$]
$$\hskip .5cm \xymatrix{
&{1}\ar @{<-}[dl]_{w_1} \ar @{<-} [dr]^{w_2}&&{3}\ar@{<-}[dl]_{w_3}\ar[dr]^{w_4}\\
{1} &&{2}&&{a}
}\hskip .5cm$$
In this case the vertex $a$ can be either $2$ or $3$ or $4$.
\item[$(ii)$]
$$\hskip .5cm \xymatrix{
&{1}\ar @{<-}[dl]_{w_1} \ar @{<-} [dr]^{w_2}&&{3}\ar@{<-}[dl]_{w_3}\ar@{<-}[dr]^{w_4}\\
{1} &&{2}&&{a}
}\hskip .5cm$$
In this case the vertex $a$ can be either $3$ or $4$.
\item[$(iii)$]
$$\hskip .5cm \xymatrix{
{1}\ar[dr]^{w_1}&&{1}\ar @{<-}[dl]_{w_1} \ar @{<-} [dr]^{w_2}&&{3}\ar@{<-}[dl]_{w_3}\\
&{1} &&{2}
}\hskip .5cm$$
\item[$(iv)$]
$$\hskip .5cm \xymatrix{
{a}\ar@{<-}[dr]^{w_4}&&{1}\ar @{<-}[dl]_{w_1} \ar @{<-} [dr]^{w_2}&&{3}\ar@{<-}[dl]_{w_3}\\
&{1} &&{2}
}\hskip .5cm$$
In this case the vertex $a$ can be either $2$ or $4$.
\end{itemize}
\item Tenth case: The walk $W^{'}$ is of the form
$$\hskip .5cm \xymatrix{
&{1}\ar @{<-}[dl]_{w_1} \ar @{<-} [dr]^{w_2}&&{2}\ar@{<-}[dl]_{w_3}\\
{1} &&{2}
}\hskip .5cm$$
In this case $W$ has a subwalk of one of the following forms:
\begin{itemize}
\item[$(i)$]
$$\hskip .5cm \xymatrix{
&{1}\ar @{<-}[dl]_{w_1} \ar @{<-} [dr]^{w_2}&&{2}\ar@{<-}[dl]_{w_3}\ar@{<-}[dr]^{w_3}\\
{1} &&{2}&&{3}
}\hskip .5cm$$

\item[$(ii)$]
$$\hskip .5cm \xymatrix{
&{1}\ar @{<-}[dl]_{w_1} \ar @{<-} [dr]^{w_2}&&{2}\ar@{<-}[dl]_{w_3}\ar[dr]^{w_i}\\
{1} &&{2}&&{a}
}\hskip .5cm$$
If $i=2$, then $a=1$ and if $i=3$, then $a=2$ .
\item[$(iii)$]
$$\hskip .5cm \xymatrix{
{a}\ar[dr]^{w_i}&&{1}\ar @{<-}[dl]_{w_1} \ar @{<-} [dr]^{w_2}&&{2}\ar@{<-}[dl]_{w_3}\\
&{1} &&{2}}\hskip .5cm$$
If $i=2$, then $a=2$ and if $i=1$, then $a=1$.
\item[$(iv)$]
$$\hskip .5cm \xymatrix{
{3}\ar@{<-}[dr]^{w_4}&&{1}\ar @{<-}[dl]_{w_1} \ar @{<-} [dr]^{w_2}&&{2}\ar@{<-}[dl]_{w_3}\\
&{1} &&{2}
}\hskip .5cm$$

\end{itemize}
\item Eleventh case: The walk $W^{'}$ is of the form
$$\hskip .5cm \xymatrix{
{1}\ar[dr]^{w_1}&&{1}\ar @{<-}[dl]_{w_1} \ar @{<-} [dr]^{w_2}\\
&{1} &&{2}
}\hskip .5cm$$
In this case $W$ has a subwalk of one of the following forms:
\begin{itemize}
\item[$(i)$]
$$\hskip .5cm \xymatrix{
&{1}\ar@{<-}[dl]_{w_1}\ar[dr]^{w_1}&&{1}\ar @{<-}[dl]_{w_1} \ar @{<-} [dr]^{w_2}\\
{1}&&{1} &&{2}
}\hskip .5cm$$
\item[$(ii)$]
$$\hskip .5cm \xymatrix{
&{1}\ar[dl]_{w_4}\ar[dr]^{w_1}&&{1}\ar @{<-}[dl]_{w_1} \ar @{<-} [dr]^{w_2}\\
{a}&&{1} &&{2}
}\hskip .5cm$$
In this case the vertex $a$ can be either $2$ or $3$.
\item[$(iii)$]
$$\hskip .5cm \xymatrix{
{1}\ar[dr]^{w_1}&&{1}\ar @{<-}[dl]_{w_1} \ar @{<-} [dr]^{w_2}&&{a}\ar@{<-}[dl]_{w_4}\\
&{1} &&{2}
}\hskip .5cm$$
In this case the vertex $a$ can be either $2$ or $3$.
\item[$(iv)$]
$$\hskip .5cm \xymatrix{
{1}\ar[dr]^{w_1}&&{1}\ar @{<-}[dl]_{w_1} \ar @{<-} [dr]^{w_2}&&{a}\ar[dl]_{w_3}\\
&{1} &&{2}
}\hskip .5cm$$
In this case the vertex $a$ can be either $2$ or $3$.
\end{itemize}
 \item Twelfth case: The walk $W^{'}$ is of the form
$$\hskip .5cm \xymatrix{
{3}\ar@{<-}[dr]^{w_3}&&{1}\ar @{<-}[dl]_{w_1} \ar @{<-} [dr]^{w_2}\\
&{1} &&{2}
}\hskip .5cm$$
In this case $W$ has a subwalk of one of the following forms:
\begin{itemize}
\item[$(i)$]
$$\hskip .5cm \xymatrix{
{3}\ar@{<-}[dr]^{w_3}&&{1}\ar @{<-}[dl]_{w_1} \ar @{<-} [dr]^{w_2}&&{a}\ar@{<-}[dl]_{w_4}\\
&{1} &&{2}
}\hskip .5cm$$
In this case the vertex $a$ can be either $2$ or $4$.
\item[$(ii)$]
$$\hskip .5cm \xymatrix{
{3}\ar@{<-}[dr]^{w_3}&&{1}\ar @{<-}[dl]_{w_1} \ar @{<-} [dr]^{w_2}&&{a}\ar[dl]_{w_4}\\
&{1} &&{2}
}\hskip .5cm$$
In this case the vertex $a$ can be either $2$ or $3$ or $4$.
\item[$(iii)$]
$$\hskip .5cm \xymatrix{
&{3}\ar@{<-}[dl]_{w_4}\ar@{<-}[dr]^{w_3}&&{1}\ar @{<-}[dl]_{w_1} \ar @{<-} [dr]^{w_2}\\
{a}&&{1} &&{2}
}\hskip .5cm$$
In this case the vertex $a$ can be either $ 3 $ or $ 4 $.
\item[$(iv)$]
$$\hskip .5cm \xymatrix{
&{3}\ar[dl]_{w_4}\ar@{<-}[dr]^{w_3}&&{1}\ar @{<-}[dl]_{w_1} \ar @{<-} [dr]^{w_2}\\
{a}&&{1} &&{2}
}\hskip .5cm$$
In this case the vertex $a$ can be either  $2$ or $3$ or $4$.
\end{itemize}
 \item Thirteenth case: The walk $W^{'}$ is of the form
$$\hskip .5cm \xymatrix{
{2}\ar@{<-}[dr]^{w_3}&&{1}\ar @{<-}[dl]_{w_1} \ar @{<-} [dr]^{w_2}\\
&{1} &&{2}
}\hskip .5cm$$
In this case $W$ has a subwalk of one of the following forms:
\begin{itemize}
\item[$(i)$]
$$\hskip .5cm \xymatrix{
{2}\ar@{<-}[dr]^{w_3}&&{1}\ar @{<-}[dl]_{w_1} \ar @{<-} [dr]^{w_2}&&{3}\ar@{<-}[dl]_{w_4}\\
&{1} &&{2}
}\hskip .5cm$$

\item[$(ii)$]
$$\hskip .5cm \xymatrix{
{2}\ar@{<-}[dr]^{w_3}&&{1}\ar @{<-}[dl]_{w_1} \ar @{<-} [dr]^{w_2}&&{3}\ar[dl]_{w_4}\\
&{1} &&{2}
}\hskip .5cm$$

\item[$(iii)$]
$$\hskip .5cm \xymatrix{
&{2}\ar@{<-}[dl]_{w_4}\ar@{<-}[dr]^{w_3}&&{1}\ar @{<-}[dl]_{w_1} \ar @{<-} [dr]^{w_2}\\
{3}&&{1} &&{2}
}\hskip .5cm$$

\item[$(iv)$]
$$\hskip .5cm \xymatrix{
&{2}\ar[dl]_{w_4}\ar@{<-}[dr]^{w_3}&&{1}\ar @{<-}[dl]_{w_1} \ar @{<-} [dr]^{w_2}\\
{3}&&{1} &&{2}
}\hskip .5cm$$

\end{itemize}
 \end{itemize}
 In all the above cases, there exists an indecomposable $5$-factor serial right $\Lambda$-module which gives a contradiction.\\
 Assume on the contrary that the condition $(ii)$ does not hold. Then there exists a walk $W$ of length greater than or equal to $5$ which has a subwalk $W^{'}$ of length $4$ that $W^{'}$ has a subwalk of the form $w_{1}^{-1}w_{2}^{+1}$. By the condition $(i)$, $W$ has not a subwalk $W^{''}$ of length greater than or equal to $4$ that $W''$ has a subwalk of the form $w_{1}^{+1}w_{2}^{-1}$. Since $\Lambda$ is an algebra of finite type, the walk $W^{'}$ is one of the following forms.
 \begin{itemize}
 \item First case: The walk $W^{'}$ is of the form
 $$\hskip .5cm \xymatrix{
{2}\ar@{<-}[dr]^{w_1}&&{3}\ar @{<-}[dl]_{w_2} \ar  [dr]^{w_3}&&{5}\ar@{<-}[dl]_{w_4}\\
&{1} &&{4}
}\hskip .5cm$$
In this case $W$ has a subwalk of one of the following forms:
\begin{itemize}
\item[$(i)$]
 $$\hskip .5cm \xymatrix{
{2}\ar@{<-}[dr]^{w_1}&&{3}\ar @{<-}[dl]_{w_2} \ar  [dr]^{w_3}&&{5}\ar@{<-}[dl]_{w_4}\ar[dr]^{w_5}\\
&{1} &&{4}&&{a}
}\hskip .5cm$$
In this case the vertex $a$ can be either $6$ or $1$.
\item[$(ii)$]
 $$\hskip .5cm \xymatrix{
&{2}\ar@{<-}[dr]^{w_1}\ar[dl]_{w_5}&&{3}\ar @{<-}[dl]_{w_2} \ar  [dr]^{w_3}&&{5}\ar@{<-}[dl]_{w_4}\\
{a}&&{1} &&{4}
}\hskip .5cm$$
In this case the vertex $a$ can be either $6$ or $1$.
\end{itemize}
\item Second case: The walk $W^{'}$ is of the form
 $$\hskip .5cm \xymatrix{
{2}\ar@{<-}[dr]^{w_1}&&{3}\ar @{<-}[dl]_{w_2} \ar  [dr]^{w_3}&&{1}\ar@{<-}[dl]_{w_4}\\
&{1} &&{4}
}\hskip .5cm$$
In this case $W$ has a subwalk of one of the following forms:
\begin{itemize}
\item[$(i)$]
 $$\hskip .5cm \xymatrix{
{2}\ar@{<-}[dr]^{w_1}&&{3}\ar @{<-}[dl]_{w_2} \ar  [dr]^{w_3}&&{1}\ar@{<-}[dl]_{w_4}\ar[dr]^{w_i}\\
&{1} &&{4}&&{a}
}\hskip .5cm$$
for $ i=1,2$. If $i=1$, then $a=2$ and if $i=2$, then $a=3$.

\item[$(ii)$]
 $$\hskip .5cm \xymatrix{
&{2}\ar@{<-}[dr]^{w_1} \ar[dl]_{w_5}&&{3}\ar @{<-}[dl]_{w_2} \ar  [dr]^{w_3}&&{1}\ar@{<-}[dl]_{w_4}\\
{5}&&{1} &&{4}
}\hskip .5cm$$
\end{itemize}

 \item Third case: The walk $W^{'}$ is of the form
 $$\hskip .5cm \xymatrix{
&{2}\ar@{<-}[dr]^{w_1} \ar[dl]_{w_4}&&{3}\ar @{<-}[dl]_{w_2}\ar[dr]^{w_3} \\
{5}&&{1} &&{4}
}
\hskip .5cm$$
In this case $W$ has a subwalk of the following form:
$$\hskip .5cm \xymatrix{
&{2}\ar@{<-}[dr]^{w_1} \ar[dl]_{w_4}&&{3}\ar @{<-}[dl]_{w_2}\ar[dr]^{w_3}&&{a}\ar@{<-}[dl]_{w_5} \\
{5}&&{1} &&{4}
}
\hskip .5cm$$
In this case the vertex $a$ can be either $6$ or $1$.
 \item Fourth case: The walk $W^{'}$ is of the form
 $$\hskip .5cm \xymatrix{
&{2}\ar@{<-}[dr]^{w_1} \ar[dl]_{w_4}&&{3}\ar @{<-}[dl]_{w_2}\ar[dr]^{w_3} \\
{1}&&{1} &&{4}
}
\hskip .5cm$$
In this case $W$ has a subwalk of one of the following forms:
\begin{itemize}
\item[$(i)$]
$$\hskip .5cm \xymatrix{
&{2}\ar@{<-}[dr]^{w_1} \ar[dl]_{w_4}&&{3}\ar @{<-}[dl]_{w_2}\ar[dr]^{w_3}&&{5}\ar@{<-}[dl]_{w_5} \\
{1}&&{1} &&{4}
}
\hskip .5cm$$
\item[$(ii)$]
 $$\hskip .5cm \xymatrix{
{a}\ar@{<-}[dr]^{w_i}&&{2}\ar@{<-}[dr]^{w_1} \ar[dl]_{w_4}&&{3}\ar @{<-}[dl]_{w_2}\ar[dr]^{w_3} \\
&{1}&&{1} &&{4}
}
\hskip .5cm$$
If $i=1$, then $a=2$ and if $i=2$, then $a=3$.
\end{itemize}

\item Fifth case: The walk $W^{'}$ is of the form
 $$\hskip .5cm \xymatrix{
{2}\ar@{<-}[dr]^{w_1}&&{3}\ar @{<-}[dl]_{w_2} \ar  [dr]^{w_3}&&{2}\ar@{<-}[dl]_{w_1}\\
&{1} &&{1}
}\hskip .5cm$$
In this case $W$ has a subwalk of one of the following forms:
\begin{itemize}
\item[$(i)$]
 $$\hskip .5cm \xymatrix{
{2}\ar@{<-}[dr]^{w_1}&&{3}\ar @{<-}[dl]_{w_2} \ar  [dr]^{w_3}&&{2}\ar@{<-}[dl]_{w_1}\ar[dr]^{w_4}\\
&{1} &&{1}&&{4}
}\hskip .5cm$$
\item[$(ii)$]
  $$\hskip .5cm \xymatrix{
&{2}\ar@{<-}[dr]^{w_1}\ar[dl]_{w_4}&&{3}\ar @{<-}[dl]_{w_2} \ar  [dr]^{w_3}&&{2}\ar@{<-}[dl]^{w_1}\\
{4}&&{1} &&{1}
}\hskip .5cm$$
\end{itemize}

 \item Sixth case: The walk $W^{'}$ is of the form
 $$\hskip .5cm \xymatrix{
{2}\ar@{<-}[dr]^{w_1}&&{3}\ar @{<-}[dl]_{w_2} \ar  [dr]^{w_3}&&{3}\ar@{<-}[dl]_{w_2}\\
&{1} &&{1}
}\hskip .5cm$$
In this case $W$ has a subwalk of one of the following forms:
\begin{itemize}
\item[$(i)$]
 $$\hskip .5cm \xymatrix{
{2}\ar@{<-}[dr]^{w_1}&&{3}\ar @{<-}[dl]_{w_2} \ar  [dr]^{w_3}&&{3}\ar@{<-}[dl]_{w_2}\ar[dr]^{w_3}\\
&{1} &&{1}&&{1}
}\hskip .5cm$$
\item[$(ii)$]
  $$\hskip .5cm \xymatrix{
&{2}\ar@{<-}[dr]^{w_1}\ar[dl]_{w_4}&&{3}\ar @{<-}[dl]_{w_2} \ar  [dr]^{w_3}&&{3}\ar@{<-}[dl]_{w_2}\\
{4}&&{1} &&{1}
}\hskip .5cm$$
\end{itemize}
\item Seventh case: The walk $W^{'}$ is of the form
 $$\hskip .5cm \xymatrix{
&{2}\ar@{<-}[dr]^{w_1}\ar[dl]_{w_4}&&{3}\ar @{<-}[dl]_{w_2} \ar  [dr]^{w_3}\\
{4}&&{1} &&{1}
}\hskip .5cm$$
In this case $W$ has a subwalk of one of the following forms:
\begin{itemize}
\item[$(i)$]
 $$\hskip .5cm \xymatrix{
&{2}\ar@{<-}[dr]^{w_1}\ar[dl]_{w_4}&&{3}\ar @{<-}[dl]_{w_2} \ar  [dr]^{w_3}&&{a}\ar@{<-}[dl]_{w_i}\\
{4}&&{1} &&{1}
}\hskip .5cm$$
If $i=1$, then $a=2$ and if $i=2$, then $a=3$.
\item[$(ii)$]
  $$\hskip .5cm \xymatrix{
{5}\ar@{<-}[dr]^{w_5}&&{2}\ar@{<-}[dr]^{w_1}\ar[dl]_{w_4}&&{3}\ar @{<-}[dl]_{w_2} \ar  [dr]^{w_3}\\
&{4}&&{1}&&{1}
}\hskip .5cm$$
\end{itemize}
\item Eighth case: The walk $W^{'}$ is of the form
 $$\hskip .5cm \xymatrix{
{1}\ar@{<-}[dr]^{w_1}&&{2}\ar @{<-}[dl]_{w_2} \ar  [dr]^{w_3}&&{4}\ar@{<-}[dl]_{w_4} \\
&{1} &&{3}
}\hskip .5cm$$
In this case $W$ has a subwalk of one of the following forms:
\begin{itemize}
\item[$(i)$]
$$\hskip .5cm \xymatrix{
{1}\ar@{<-}[dr]^{w_1}&&{2}\ar @{<-}[dl]_{w_2} \ar  [dr]^{w_3}&&{4}\ar @{<-}[dl]_{w_4}\ar[dr]^{w_5}\\
&{1} &&{3}&&{5}
}\hskip .5cm$$
\item[$(ii)$]
 $$\hskip .5cm \xymatrix{
&{1}\ar@{<-}[dr]^{w_1}\ar[dl]_{w_i}&&{2}\ar @{<-}[dl]_{w_2} \ar  [dr]^{w_3}&&{4}\ar@{<-}[dl]_{w_4} \\
{a}&&{1} &&{3}
}\hskip .5cm$$
If $i=1$, then $a=1$ and if $i=2$, then $a=2$.
\end{itemize}
\item Ninth case: The walk $W^{'}$ is of the form
 $$\hskip .5cm \xymatrix{
&{1}\ar@{<-}[dr]^{w_1}\ar[dl]_{w_1}&&{2}\ar @{<-}[dl]_{w_2} \ar  [dr]^{w_3}\\
{1}&&{1} &&{3}
}\hskip .5cm$$
In this case $W$ has a subwalk of one of  the following forms:
\begin{itemize}
\item[$(i)$]
$$\hskip .5cm \xymatrix{
&{1}\ar@{<-}[dr]^{w_1}\ar[dl]_{w_1}&&{2}\ar @{<-}[dl]_{w_2} \ar  [dr]^{w_3}&&{4}\ar @{<-}[dl]_{w_4}\\
{1}&&{1} &&{3}
}\hskip .5cm$$
\item[$(ii)$]
$$\hskip .5cm \xymatrix{
{1}\ar@{<-}[dr]^{w_1}&&{1}\ar@{<-}[dr]^{w_1}\ar[dl]_{w_1}&&{2}\ar @{<-}[dl]_{w_2} \ar  [dr]^{w_3}\\
&{1}&&{1} &&{3}
}\hskip .5cm$$

\end{itemize}
\item Tenth case: The walk $W^{'}$ is of the form
 $$\hskip .5cm \xymatrix{
{1}\ar@{<-}[dr]^{w_1}&&{1}\ar@{<-}[dr]^{w_1}\ar[dl]_{w_1}&&{2}\ar @{<-}[dl]_{w_2} \\
&{1}&&{1}
}\hskip .5cm$$
In this case $W$ has a subwalk of one of the following forms:
\begin{itemize}
\item[$(i)$]
  $$\hskip .5cm \xymatrix{
{1}\ar@{<-}[dr]^{w_1}&&{1}\ar@{<-}[dr]^{w_1}\ar[dl]_{w_1}&&{2}\ar @{<-}[dl]_{w_2}\ar[dr]^{w_3} \\
&{1}&&{1} &&{3}
}\hskip .5cm$$

\item[$(ii)$]
 $$\hskip .5cm \xymatrix{
&{1}\ar@{<-}[dr]^{w_1}\ar[dl]_{w_1}&&{1}\ar@{<-}[dr]^{w_1}\ar[dl]_{w_1}&&{2}\ar @{<-}[dl]_{w_2} \\
{1}&&{1}&&{1}
}\hskip .5cm$$

 \end{itemize}
 \item Eleventh case: The walk $W^{'}$ is of the form
 $$\hskip .5cm \xymatrix{
&{1}\ar@{<-}[dr]^{w_1}\ar[dl]_{w_2}&&{2}\ar @{<-}[dl]_{w_2} \ar  [dr]^{w_3}\\
{2}&&{1} &&{3}
}\hskip .5cm$$
In this case $W$ has a subwalk of one of the following forms:
\begin{itemize}
\item[$(i)$]
$$\hskip .5cm \xymatrix{
&{1}\ar@{<-}[dr]^{w_1}\ar[dl]_{w_2}&&{2}\ar @{<-}[dl]_{w_2} \ar  [dr]^{w_3}&&{4}\ar @{<-}[dl]_{w_4}\\
{2}&&{1} &&{3}
}\hskip .5cm$$
\item[$(ii)$]
$$\hskip .5cm \xymatrix{
{3}\ar@{<-}[dr]^{w_3}&&{1}\ar@{<-}[dr]^{w_1}\ar[dl]_{w_2}&&{2}\ar @{<-}[dl]_{w_2} \ar  [dr]^{w_3}\\
&{2}&&{1} &&{3}
}\hskip .5cm$$

\end{itemize}
\item Twelfth case: The walk $W^{'}$ is of the form
 $$\hskip .5cm \xymatrix{
{3}\ar@{<-}[dr]^{w_3}&&{1}\ar@{<-}[dr]^{w_1}\ar[dl]_{w_2}&&{2}\ar @{<-}[dl]_{w_2} \\
&{2}&&{1}
}\hskip .5cm$$
In this case $W$ has a subwalk of one of the following forms:
\begin{itemize}
\item[$(i)$]
  $$\hskip .5cm \xymatrix{
{3}\ar@{<-}[dr]^{w_3}&&{1}\ar@{<-}[dr]^{w_1}\ar[dl]_{w_2}&&{2}\ar @{<-}[dl]_{w_2}\ar[dr]^{w_3} \\
&{2}&&{1} &&{3}
}\hskip .5cm$$

\item[$(ii)$]
 $$\hskip .5cm \xymatrix{
&{3}\ar@{<-}[dr]^{w_3}\ar[dl]_{w_4}&&{1}\ar@{<-}[dr]^{w_1}\ar[dl]_{w_2}&&{2}\ar @{<-}[dl]_{w_2} \\
{4}&&{2}&&{1}
}\hskip .5cm$$

 \end{itemize}
 \end{itemize}
 In all the above cases there exists an indecomposable $5$-factor serial right $\Lambda$-module which gives a contradiction.\\
 Now assume that the condition $(iii)$ does not hold. Then there exist paths $p=p_1...p_t$ and $q=q_1...q_s$ with the same target and the same source that $p_i$ and $q_j$ are arrows, $s\geq 2$, $t\geq 2$, $p-q\in I$ and $s+t>5$.
 $$\hskip .5cm \xymatrix{
&{}\ar @{<-}[dl]_{p_{1}}\ar [r]^{p_{2}}&{}\ar[r]^{p_{3}}&\cdots\cdots&\ar[r]^{p_{t-1}}&\ar[dr]^{p_{t}}\\
{}\ar[dr]_{q_1}&&&&&&{}\ar @{<-}[dl]^{q_s}\\
&{}\ar [r]_{q_{2}}&{}\ar[r]_{q_3}&\cdots\cdots&\ar[r]_{q_{s-1}}&}\hskip .5cm$$
  We have two cases. In the first case, we have $t\geq 2$ and $s\geq 4$. Then there exists a string $w=q_{s-2}^{+1}q_{s-1}^{+1}q_{s}^{+1}p_{t}^{-1}$ of $\frac{KQ}{I}$, that $M(w)$ is a $5$-factor serial right $\Lambda$-module which gives a contradiction.
 In the second case, we have $t\geq 3$ and $s\geq 3$. Then there exists a string $w=p_{t-1}^{+1}p_{t}^{+1}q_{s}^{-1}q_{s-1}^{-1}$ of $\frac{KQ}{I}$, that $M(w)$ is a $5$-factor serial right $\Lambda$-module which gives a contradiction. Now assume that the condition $(iv)$ does not hold. By \cite[Theorem 5.13]{NS} and \cite[Theorem 3.3]{NS1} in this case $\Lambda$ is a $t$-Nakayama algebra for some $t\leq 3$, which gives a contradiction.\\
   Conversely, assume that $\Lambda$ is a special biserial algebra of finite type that $(Q, I)$ satisfies the conditions $(i)-(iv)$. By \cite{BR} and \cite{WW}, every indecomposable right $\Lambda$-module is either a string $M(w)$, $w\in \widetilde{\mathcal{S}}(\Lambda)$ or a band module $M(v, m, \varphi)$, $v\in \widetilde{\mathcal{B}}(\Lambda)$, $m\geq 1$ and $\varphi\in Aut(K^m)$ or non-uniserial projective-injective. Since $\Lambda$ is representation-finite, $\mathcal{B}(\Lambda)=\varnothing$. For any $w\in \widetilde{\mathcal{S}}(\Lambda)$, $w$ is either $w_1^{+1}...w_n^{+1}$ or $w_1^{-1}w_2^{+1}$ or $ w_1^{+1}w_2^{-1} $ or $ w_1^{-1}w_{2} ^{-1}w_3^{+1} $ or $ w_1^{-1}w_2^{-1}w_3^{+1}w_4^{+1} $ or $ w_1^{-1}w_2^{+1}w_3^{+1}w_4^{+1} $ or $w_1^{+1}w_{2} ^{+1}w_3^{-1}$ or $ w_1^{-1}w_{2} ^{+1}w_3^{-1} $. If $w=w_1^{+1}...w_n^{+1}$, then $M(w)$ is uniserial. If $w=w_1^{-1}w_2^{+1}$, then $M(w)$ is $2$-factor serial. If  either $w=w_1^{+1}w_2^{-1}$ or $w=w_1^{-1}w_{2} ^{-1}w_3^{+1}$ , then $M(w)$ is $3$-factor serial. If either $ w_1^{-1}w_2^{-1}w_3^{+1}w_4^{+1} $  or  $ w_1^{-1}w_2^{+1}w_3^{+1}w_4^{+1} $ or $w_1^{+1}w_{2} ^{+1}w_3^{-1}$  or $ w_1^{-1}w_{2} ^{+1}w_3^{-1} $, then $M(w)$ is a $4$-factor serial right $\Lambda$-module. By the condition $\left( \mathrm{iv}\right)$, there exists at least one string module $M(w)$, where $w$ is either $ w_1^{-1}w_2^{-1}w_3^{+1}w_4^{+1} $  or  $ w_1^{-1}w_2^{+1}w_3^{+1}w_4^{+1} $ or $w_1^{+1}w_{2} ^{+1}w_3^{-1}$  or $ w_1^{-1}w_{2} ^{+1}w_3^{-1} $. Therefore $\Lambda$ is a right $4$-Nakayama and the result follows.

\end{proof}

\begin{remark}
\item[$(1)$] Let $\Lambda=KQ/I$ be a right $4$-Nakayama finite dimensional $K$-algebra that the condition $(iv)(c)$ of the theorem \ref{T5} holds. Then there exists a non-uniserial projective-injective right $\Lambda$-module $M$, that $M$ is either $3$-factor serial or $4$-factor serial.
 \item[$(2)$] Let $\Lambda=\frac{KQ}{I}$ be a basic, connected and finite dimensional $K$-algebra. By \cite[Proposition 4.2]{NS1} $\Lambda$ is right $4$-Nakayama self-injective if and only if $Q$ is the following quiver with $s\geq 1$ and $m,n\geq 2$,
$$
\xymatrix{
&& {\bsm\bullet \esm}\ar[dl]_{\beta_{n}^{[s-1]}}&{\bsm \bullet \esm}\ar[l]_{\beta_{n-1}^{[s-1]}}\\
&{\bsm\bullet\esm}\ar[dl]_{\beta_{1}^{[0]}}\ar[d]_{\alpha_{1}^{[0]}}&{\bsm\bullet \esm}\ar[l]^{\alpha_{m}^{[s-1]}} &{\bsm\bullet\esm}\ar[l]^{\alpha_{m-1}^{[s-1]}}&{\bsm....\esm} \\
{\bsm\bullet\esm}\ar[d]_{\beta_{2}^{[0]}}&{\bsm\bullet\esm}\ar[d]_{\alpha_{2}^{[0]}}&&&&{\bsm.\\.\\.\esm}&{\bsm.\\.\\.\esm} \\
{\bsm.\\.\\.\esm}\ar[d]_{\beta_{n-1}^{[0]}}&{\bsm.\\.\\.\esm}\ar[d]_{\alpha_{m-1}^{[0]}}&&&&{\bsm\bullet\esm}&{\bsm\bullet\esm}\\
{\bsm\bullet\esm}\ar[dr]_{\beta_{n}^{[0]}}&{\bsm\bullet\esm}\ar[d]_{\alpha_{m}^{[0]}}&&&&{\bsm\bullet\esm}\ar[u]_{\alpha_{2}^{[2]}}&{\bsm\bullet\esm}\ar[u]_{\beta_{2}^{[2]}} \\
&{\bsm\bullet\esm}\ar[r]^{\alpha_{1}^{[1]}}\ar[dr]_{\beta_{1}^{[1]}}&{\bsm\bullet\esm}\ar[r]^{\alpha_{2}^{[1]}}&{\bsm....\esm}\ar[r]^{\alpha_{m-1}^{[1]}}&{\bsm\bullet\esm}\ar[r]^{\alpha_{m}^{[1]}}&{\bsm\bullet\esm}\ar[u]_{\alpha_{1}^{[2]}} \ar[ur]_{\beta_{1}^{[2]}}\\
&&{\bsm \bullet \esm}\ar[r]_{\beta_{2}^{[1]}}&{\bsm....\esm}\ar[r]_{\beta_{n-1}^{[1]}}&{\bsm\bullet\esm}\ar[ur]_{\beta_{n}^{[1]}} \\}
$$
and $I$ is the ideal generated by the following relations:
 \begin{itemize}
 \item[$(i)$] $\alpha_{1}^{[i]}\cdots\alpha_{m}^{[i]}=\beta_{1}^{[i]}\cdots\beta_{n}^{[i]}$ for all $i\in \lbrace 0,\cdots,s-1\rbrace$;
  \item[$(ii)$] $\beta_n^{[i]}\alpha_{1}^{[i+1]}=0$, $\alpha_m^{[i]}\beta_{1}^{[i+1]}=0$ for all $i\in \lbrace 0,\cdots,s-2\rbrace$,  $\beta_{n}^{[s-1]} \alpha_{1}^{[0]}=0$ and $\alpha_m^{[s-1]}\beta_{1}^{[0]}=0$;
 \item[$(iii)$]
 \begin{itemize}
\item[$(a)$] Paths of the form $\alpha_i^{[j]}...\alpha_h^{[f]}$ of  length $m+1$ are equal to $0$.
\item[$(b)$] Paths of the form $\beta_i^{[j]}...\beta_h^{[f]}$ of length $n+1$ are equal to $0$.
\end{itemize}
\end{itemize}
 that $m+n=5$.
\end{remark}

\section*{acknowledgements} The research of the first
author was in part supported by a grant from IPM (No. 96170419).



\begin{thebibliography}{10}
\bibitem{A2} \textsc{H. Asashiba}, On algebras of second local type II, \emph{ Osaka J. Math.} \textbf{21} (1984), 343-364.

\bibitem{A1} \textsc{H. Asashiba}, On algebras of second local type III, \emph{ Osaka J. Math.} \textbf{24} (1987), 107-122.


\bibitem{ARS} \textsc{M. Auslander, I. Reiten, S. O. Smal{\o}}, \emph{Representation theory of Artin algebras}, Cambridge studies in
advanced mathematics \textbf{36}, Cambridge University Press, Cambridge, 1995.

\bibitem{BR} \textsc{M. C. R.  Butler, C. M. Ringel}, Auslander-Reiten sequences with few middle terms, with applications to string algebras, \emph{Comm. Algebra} \textbf{15} (1987), 145-179.

\bibitem{NS1} \textsc{A. Nasr-Isfahani, M. Shekari}, Representations of right 3-Nakayama algebras, arXiv:1805.04412.

\bibitem{NS} \textsc{A. Nasr-Isfahani, M. Shekari}, Right $n$-Nakayama algebras and their representations, arXiv:1710.01176.

\bibitem{SY} \textsc{A. Skowro´nski, K. Yamagata}, \emph{Frobenius algebras I: Basic representation
theory}, EMS Textbooks in Mathematics, European Mathematical Society (EMS),
Z¨urich, 2011.

\bibitem{T2} \textsc{H. Tachikawa}, On algebras of which every indecomposable representation has an irreducible one as the top or the bottom Loewy constituent, \emph{Math. Z.} \textbf{75} (1961), 215-227.

\bibitem{T1} \textsc{H. Tachikawa}, On rings for which every indecomposable right module has a unique maximal submodule, \emph{Math. Z.} \textbf{71} (1959), 200-222.


\bibitem{WW} \textsc{B. Wald, J. Waschbusch}, Tame biserial algebras,  \emph{J. Algebra} \textbf{15} (1985), no. 2, 480-500.

\bibitem{Z} \textsc{A. Zimmermann}, \emph{Representation theory, A homological algebra point of view},
Algebra and Applications, vol. 19, Springer, Cham, 2014.
\end{thebibliography}
\end{document}